\documentclass[12pt,oneside]{amsart}

\usepackage[
    margin=1in, 
    hmarginratio=1:1, 
    vmarginratio=1:1
]{geometry} 

\usepackage[T1]{fontenc}
\usepackage[utf8]{inputenc}

\usepackage{amsmath}
\usepackage{amssymb}

\usepackage{enumitem}

\usepackage{graphicx}

\usepackage[colorlinks]{hyperref}

\newtheorem{thm}{Theorem}[section]

\newtheorem{claim}[thm]{Claim}

\newtheorem{cor}[thm]{Corollary}

\newtheorem{lem}[thm]{Lemma}
\newtheorem{prop}[thm]{Proposition}

\newtheorem{fact}[thm]{Fact}

\def\L{\mathcal{L}}

\DeclareMathOperator{\Span}{Span}

\DeclareMathOperator{\gr}{gr}

\def\Ker{{\operatorname{Ker}}}

\def\wt{{\operatorname{wt}}}
\def\idd{{\operatorname{id}}}

\def\End{{\operatorname{End}}}
\def\her{{\operatorname{her}}}

\title[Growth of Lie algebras and Jordan algebras]{On the growth of Lie algebras and Jordan algebras}

\author{Be'eri Greenfeld} \address{Department of Mathematics and Statistics, CUNY Hunter College, 695 Park Avenue, New York NY 10065, USA} \email{beeri.greenfeld@hunter.cuny.edu}

\begin{document}

\maketitle

\begin{abstract}
We prove that every increasing, polynomially bounded function is realizable, up to bi-Lipschitz equivalence, as the growth function of some finitely generated Lie algebra. This does not extend to general increasing subexponential functions. We further prove that the classes of growth functions of Jordan algebras and associative algebras coincide, answering in the affirmative a question of Mart\'inez and Zelmanov.
\end{abstract}

\section{Introduction}

Let $\Bbbk$ be a field of characteristic different from two and let $A$ be a finitely generated (not necessarily associative) $\Bbbk$-algebra. If $A$ is infinite-dimensional as a $\Bbbk$-vector space, a useful invariant quantifying `how fast it becomes infinite-dimensional' is given by its \emph{growth function}. Let $X$ be a finite generating set of $A$, and let $X^{\leq n}$ be the set of all products of at most $n$ elements from $X$. The growth function of $A$ with respect to $X$ is 
\[
\gamma_{A,X}(n) = \dim_\Bbbk \Span_\Bbbk X^{\leq n}.
\]
This function depends numerically on the choice of $X$, but, given any two finite generating sets $X,Y$ of $A$, the resulting growth functions are asymptotically equivalent. We say that two non-decreasing functions $f,g\colon \mathbb{N}\rightarrow \mathbb{N}$ are bi-Lipschitz equivalent, denoted $f\sim g$, if for some constants $C,D>0$ we have $f(n)\leq Cg(Dn)$ and $g(n)\leq Cf(Dn)$ for all $n$. We write $f\preceq g$ if $f(n)\leq Cg(Dn)$ for all $n$. Then $\gamma_{A,X}(n)\sim \gamma_{A,Y}(n)$.

\medskip

The study of the growth functions of infinite-dimensional algebras is parallel to the study of group growth, one of the most important asymptotic invariants in geometric group theory. Given a finitely generated group $G$ and a finite generating set $X$, the growth function of $G$ with respect to $X$ is given by $\gamma_{G,X}(n)=|(X\cup X^{-1})^{\leq n}|$. Geometrically, this is the volume of the ball of radius $n$ in the Cayley graph of $G$ with respect to $X$, centered at the identity.
By Gromov's celebrated theorem \cite {Gromov}, groups of polynomially bounded growth are virtually nilpotent. Grigorchuk \cite{Grigorchuk} constructed the first groups of intermediate growth, that is, not bounded by any polynomial but slower than every exponential function; see also \cite{Bartholdi,ErschlerZheng}. Since then, groups with a variety of interesting intermediate growth functions have been constructed; see e.g. \cite{BartholdiErschler,KassabovPak,NekrashevychSimpleGroups}. 
Nevertheless, the \emph{inverse problem of growth functions of groups}, that is, characterizing the possible growth functions of finitely generated groups, remains wide open. No groups of intermediate growth slower than that of Grigorchuk's group are known, and Grigorchuk's gap conjecture asserts that no growth functions of groups can be faster than every polynomial and slower than $\exp(\sqrt{n})$. See \cite{ShalomTao} for a partial gap result in this direction.
The growth function of every group coincides with the growth of its group algebra. Interesting asymptotic invariants of groups are reflected by the growth of associated Lie algebras, providing group theorists with a rich Lie-theoretic toolkit; for instance, Grigorchuk was able to prove his gap conjecture for residually-$p$ groups; see \cite{BartholdiGrigorchuk,GrigorchukResiduallyp}.

\medskip

The study of the growth functions of associative algebras originated in the work of Gel'fand and Kirillov on geometric representation theory of algebraic groups, where growth was used to define a birational invariant for universal enveloping algebras \cite{GelfandKirillov}. The class of growth functions of associative algebras coincides with the class of growth functions of semigroups, thus properly containing the class of growth functions of groups. The richness, or smoothness, of this space makes the \emph{inverse problem of growth functions of associative algebras} more accessible. Growth functions of associative algebras are either bounded, linear, or at least quadratic; the latter is Bergman's gap theorem \cite{Bergman}. Over the years, many interesting contributions have been made toward solving this inverse problem; see e.g. \cite{AAJZ,BelovBorisenkoLatyshev,BorhoKraft,GreenfeldSimple,GreenfeldPrimePrimitive,GaApp,NekrashevychAlgebras,Shneerson,SmoktunowiczBartholdi,Trofimov,Vishne,Warfield} and references therein. The inverse problem was eventually resolved by Bell and Zelmanov \cite{BZ}, who remarkably gave a complete characterization of the growth functions of associative algebras. Essentially, they proved that the only restrictions on such growth functions come from Bergman's gap theorem and a submultiplicativity-type condition on the discrete derivative.

\medskip

The \emph{inverse problem of growth functions of Lie algebras} remains mysterious. The class of growth functions of Lie algebras is richer than the class of growth functions of associative algebras. Given any associative algebra, one can construct a Lie algebra with an equivalent growth function \cite{AlahmadiAlharthi,AAJZ_Lie}, but growth functions of Lie algebras need not be submultiplicative, nor do they obey Bergman's linear-to-quadratic gap theorem. The only `natural' conditions are that these functions be increasing (unless bounded) and exponentially bounded. Additional interesting examples of growth functions of Lie algebras have been constructed in \cite{PetrogradskyOscillating,ShestakovZelmanov}.

In the polynomial realm, we give a full solution of the inverse problem for growth functions of Lie algebras:

\begin{thm} \label{thm:Lie}
Let $\Bbbk$ be a field. Let $f\colon \mathbb{N} \rightarrow \mathbb{N}$ be an increasing, polynomially bounded function. Then there exists a finitely generated Lie algebra $L$ over $\Bbbk$ such that $\gamma_L(n)\sim f(n)$.
\end{thm}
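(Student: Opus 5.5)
We will realize $f$ by a graded Lie algebra whose homogeneous components are under direct control. Write $f(n)\sim \sum_{k\le n} a_k$ with $a_k=f(k)-f(k-1)\geq 1$ (increasing). Polynomial boundedness, $f(n)\le Cn^d$, gives $a_k\le Cn^d$ on average. We will construct $L=\bigoplus_{k\ge1}L_k$, generated in degree $1$ by finitely many elements, such that $\dim L_k$ is comparable to $a_k$ up to the allowed bi-Lipschitz distortion. Since a graded algebra generated in degree $1$ satisfies $\gamma_L(n)=\sum_{k\le n}\dim L_k$, this gives $\gamma_L\sim f$.

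\textbf{Construction.} Take a base Lie algebra $W$ of polynomial growth with a large supply of room, for instance a current algebra $\mathfrak{g}\otimes \Bbbk[t_1,\dots,t_d]$ or the positive part of the Witt algebra tensored with a polynomial ring. Its degree-$k$ component has dimension of order $k^{d-1}$, hence at least $a_k$ after rescaling. The plan is then the following.
\begin{enumerate}[label=(\roman*)]
\item Embed the target in a semidirect product $L=\langle x\rangle\ltimes M$. Here $M=\bigoplus_k M_k$ is an abelian ideal (a module), $x$ is a degree-one derivation, and $M_k$ is a chosen subspace of dimension about $a_k$.
\item Choose $M$ as a submodule of a graded module $V$ of polynomial growth, for example $V=\Bbbk[t_1,\dots,t_d]$ with $x$ acting as $\partial$-like shifts together with finitely many extra degree-one operators $y_1,\dots,y_r$. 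Then $M$ is the cyclic module generated by a single vector under $x,y_i$, and we are free to decide, degree by degree, which monomials survive.
\item Kill monomials so as to trim $\dim M_k$ down to $a_k$. Because $a_k$ is not monotone, pure quotients of a fixed module cannot be used. Instead, make the action of the generators depend on the degree through a sequence of ``selector'' functions. Concretely, take $M$ with basis $e_{k,j}$ for $1\le j\le a_k$, set $x e_{k,j}=e_{k+1,j}$ whenever $j\le a_{k+1}$, and let a second generator $y$ create new basis vectors $e_{k+1,j}$ for $a_k<j\le a_{k+1}$ by raising the index $j$.
\end{enumerate}
Making such degree-dependent actions finitely generated is the standard trick. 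One encodes the sequence $(a_k)$ into an infinite word and lets the generators be ``shift plus letter-dependent'' operators, as in the monomial-algebra constructions of Bell--Zelmanov type. Alternatively, one borrows their associative realization of $\sum_k\min(a_k,\dots)$ and passes to Lie algebras through the commutator construction of \cite{AlahmadiAlharthi,AAJZ_Lie}, applied to a module rather than an algebra so that the growth is additive rather than submultiplicative.

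\textbf{Main obstacle.} The hard part is reconciling finite generation with non-monotone, non-submultiplicative layer dimensions. The associative result \cite{BZ} requires a regularity condition on the discrete derivative that $f$ need not satisfy. We therefore expect to exploit the bi-Lipschitz freedom. We replace $f$ by an equivalent function $\tilde f\sim f$ whose increments $\tilde a_k$ are piecewise of the form $\lfloor c\,k^{e}\rfloor$ on long dyadic blocks, with exponents $e\le d$. Since $f(2n)\le C' f(n)$ fails in general, we instead compare at scales $D n$ and use that any increasing $f\le Cn^d$ can be sandwiched by such a step-polynomial profile up to $f(n)\le C\tilde f(Dn)$. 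Each block can then be realized by a finitely generated current-algebra piece. The blocks are glued using one extra generator whose iterated brackets switch between regimes, following a Toeplitz-type aperiodic word.

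Verifying the upper bound $\gamma_L(n)\preceq f(n)$, namely that brackets among generators do not produce more elements than intended, is where we expect most of the work. We would try first to force $[M,M]=0$ and to make $x,y$ act locally nilpotently within each block.
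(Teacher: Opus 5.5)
There is a genuine gap. Your overall shape, a Lie algebra of operators acting on a module of prescribed growth and glued by a semidirect product, matches the paper's. But you never control the growth of the acting part, and that is the whole difficulty.

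Step (i) as stated cannot work. If $L=\Bbbk x\ltimes M$ with $M$ abelian is generated by $\lambda_jx+m_j$, $1\le j\le r$, then every iterated bracket of length $k\geq 2$ lies in $\Span_\Bbbk\{x^{k-1}m_j\}$. Hence $\gamma_L(n)\leq rn$.

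Once you add operators $y_1,\dots,y_r$ in (ii)--(iii), $L$ is really $H\ltimes M$, with $H$ the Lie algebra generated by $x,y_1,\dots,y_r$ inside $\End_\Bbbk(M)$, and $\gamma_H\preceq\gamma_L$. Your degree-dependent `selector' maps $M_k\to M_{k+1}$ impose no relations among Lie monomials in $x,y$. So nothing stops $\dim H_n$ from being as large as the degree-$n$ part of a free Lie algebra, which is exponential. The remedies you mention do not supply a polynomial bound on $\gamma_H$: $[M,M]=0$ is automatic in a semidirect product, and local nilpotence gives no such bound.

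Some new input is unavoidable here. Let $A\subseteq\End_\Bbbk(M)$ be the associative algebra generated by the operators and $e$ the cyclic vector. The $M$-part of the $n$-ball of $L$ lies in $A_{\leq n-1}e$, so $\gamma_M\preceq\gamma_A$. For $f(n)=\lfloor n^{3/2}\rfloor$ this forces $\gamma_A\succeq n^{3/2}$, hence $\gamma_A\succeq n^2$ by Bergman's gap theorem \cite{Bergman}. Meanwhile you need $\gamma_H\preceq\gamma_L\preceq n^{3/2}$. So $H$ must grow strictly more slowly than its own associative envelope.

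For the same reason, your alternative route fails for such $f$. An associative realization followed by the commutator construction of \cite{AlahmadiAlharthi,AAJZ_Lie} just reproduces the growth of an associative algebra. The step-polynomial regularization of $f$ is harmless but does not address this. `Current-algebra pieces glued by a Toeplitz-type word' is not yet a construction whose growth can be checked.

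This Lie-versus-associative deficit is exactly the missing idea, and it is the heart of the paper's proof. The paper takes $M$ to be a monomial module spanned by a suffix-closed set $\mathcal{S}$ of words over $\{x,y\}$. It is built in strips $[N_i,N_{i+1}]$, with $N_{i+1}=qN_i$, from the sparse words $\L(j,N_i)x^{N_i}$, in which any two $y$'s are separated by at least $N_i-1$ letters $x$. An exponent $q_i$ is chosen so that $N_i^{q_i-1}/(q_i-1)!\leq\Delta_i<N_i^{q_i}/q_i!$, where $\Delta_i=f(N_{i+1})-f(N_i)$.

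Then $\gamma_H(n)=\sum_{m\leq n}\dim_\Bbbk\varphi(\mathfrak{F}_m)$, where $\varphi$ maps the free Lie algebra into the monomial algebra $A_{\mathcal{S}^{\her}}$. Its nonzero words with length in the $i$-th strip contain at most $q_i-1$ letters $y$. Since $[x,x]=0$, left-normed Lie monomials may be taken to begin with $y$, which costs one power of $N_i$. So $H$ contributes only $O(N_i^{q_i-1})=O(\Delta_i)$ in that strip, while the module contributes exactly $\Delta_i$. Your plan contains no mechanism producing such a deficit, so the upper bound $\gamma_L\preceq f$ remains unproved in it.
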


This characterization does not extend to general increasing subexponential functions (Proposition \ref{prop:Liegap}). We prove an analog of Theorem \ref{thm:Lie} for associative algebras (Theorem \ref{thm:associative}), and derive a corresponding result for Jordan algebras (Corollary \ref{cor:Jordan}).

\medskip

Jordan algebras are an important class of non-associative algebras. They originated in physics, since Hermitian operators form a Jordan algebra under a natural operation $x\circ y = \frac{1}{2}(xy+yx)$. This construction allows one to equip every associative algebra $A$ with an associated Jordan structure, denoted $A^{(+)}$. This is analogous to how every associative algebra becomes a Lie algebra under the Lie bracket $[x,y]=xy-yx$. 
By the Poincar\'e--Birkhoff--Witt theorem, every Lie algebra embeds in a Lie algebra arising from an associative algebra in this way. In sharp contrast, not every Jordan algebra embeds in an algebra of the form $A^{(+)}$. This leads to the distinction between special Jordan algebras (those embeddable in some $A^{(+)}$) and exceptional Jordan algebras (those not embeddable in this manner). Jordan algebras play an important role in the theory of Lie algebras and, in particular, they played a central role in Zelmanov's solution of the Restricted Burnside Problem \cite{Jacobson,McCrimmon,ZelmanovRBPodd,ZelmanovRBP2,ZelmanovJCA}. 
Jordan algebras, and more so, Jordan superalgebras, also arise naturally in the algebraic study of superconformal symmetry. In particular, certain superconformal Lie
algebras correspond, via the Tits--Kantor--Koecher construction, to
graded-simple Jordan superalgebras of linear growth. The corresponding
Jordan superalgebras were classified in characteristic zero in the
monumental work of Kac, Mart\'inez and Zelmanov
\cite{KacMartinezZelmanov}. See also \cite{KacVanDeLeur}.

The growth functions of Jordan algebras have therefore naturally attracted attention over the years. Mart\'inez and Zelmanov \cite{Martinez,MZ22} proved that special Jordan algebras have growth functions equivalent to those of their associative envelopes. Interestingly, they obtained a similar result at the other extreme, for Jordan algebras that are `very far' from being special \cite[Theorem~4]{MZ22}. They also proved that arbitrary Jordan algebras satisfy an analog of Bergman's gap theorem \cite{MZ96} (this does not carry over to Jordan superalgebras \cite{PetrogradskyShestakov}). The question of whether the classes of growth functions of Jordan algebras and associative algebras coincide was left open \cite[Question~1]{MZ22}. The next result answers it in the affirmative, thereby resolving the \emph{inverse problem for growth functions of Jordan algebras}:

\begin{thm} \label{thm:Jordan}
The following classes coincide, up to bi-Lipschitz equivalence:
\begin{itemize}
    \item Growth functions of finitely generated Jordan algebras
    \item Growth functions of finitely generated associative algebras
    \item Bounded functions, linear functions, and increasing, submultiplicative functions $f\colon \mathbb{N}\rightarrow~\mathbb{N}$ satisfying $f(n)\succeq~n^2$.
\end{itemize}
\end{thm}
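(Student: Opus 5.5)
The plan is to prove two inclusions: associative growth functions are Jordan growth functions, and Jordan growth functions are associative growth functions. The second and third classes coincide by the theorem of Bell and Zelmanov \cite{BZ}, which characterizes associative growth functions as exactly the bounded, linear, and increasing submultiplicative functions $f\succeq n^2$. So the work lies in comparing Jordan algebras with this third class.

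\emph{Jordan $\subseteq$ third class.} Let $J$ be a finitely generated Jordan algebra with generating set $X$. If $\gamma_J$ is bounded or linear there is nothing to prove. Otherwise the Mart\'inez--Zelmanov analog of Bergman's gap theorem \cite{MZ96} gives $\gamma_J(n)\succeq n^2$. It remains to show that $\gamma_J$ is equivalent to an increasing submultiplicative function. Growth functions are automatically non-decreasing and, unless bounded, strictly increasing, since $X^{\leq n}$ stabilizes once it stops growing. For submultiplicativity I would use the property that is actually checked in \cite{BZ}, namely a submultiplicativity-type condition on the discrete derivative, and try to verify it directly for Jordan algebras. The key fact is that, by the standard linearization and Jordan-identity arguments (as in Zelmanov's work), $J$ is spanned by products $\{x_1 x_2\cdots x_k\}$ built from multiplication operators $R_{x}$ and $R_{x,y}$ applied to elements of $X$. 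This makes words of length exactly $n$ modulo shorter words behave like associative words, up to a bounded loss in length, so that $\gamma_J(n+m)-\gamma_J(n+m-1)\le C(\gamma_J(Dn)-\gamma_J(Dn-1))(\gamma_J(Dm)-\gamma_J(Dm-1))$. If this turns out to be too delicate, the fallback is to show only $\gamma_J(n+m)\le C\gamma_J(Dn)\gamma_J(Dm)$, and then to check that the form of the condition Bell and Zelmanov need is implied by this, up to $\sim$, for functions $\succeq n^2$. I expect this step to be the main obstacle, because Jordan products are not associative words and the bracketings must be controlled uniformly. For the exceptional part one may need the multiplication algebra: the associative algebra generated by the $R_x$ acting on $J$, whose growth controls that of $J$.

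\emph{Associative $\subseteq$ Jordan.} Given a finitely generated associative algebra $A$, I want a Jordan algebra with equivalent growth. The natural candidate is $A^{(+)}$, or a finitely generated Jordan subalgebra of it. By Mart\'inez--Zelmanov \cite{Martinez,MZ22}, a special Jordan algebra has growth equivalent to that of its associative envelope. So it suffices to produce, for each admissible $f$, a Jordan algebra $J\subseteq B^{(+)}$ whose associative envelope $B$ has growth $\sim f$. Since $B$ is generated by $J$, this forces $\gamma_J\sim\gamma_B\sim f$. The construction is to take the Bell--Zelmanov algebra $A$ with $\gamma_A\sim f$ and generators $X$, and to let $J$ be the Jordan subalgebra of $A^{(+)}$ generated by $X$. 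Its associative envelope is the subalgebra of $A$ generated by $X$, namely $A$ itself, so $\gamma_J\sim\gamma_A\sim f$. One must use the precise form of the Mart\'inez--Zelmanov result, in particular its assumption that the envelope is generated by the same finite set. If that result requires extra hypotheses, such as an involution for Hermitian Jordan algebras, I would instead use $A\oplus A^{op}$ with the exchange involution, whose Hermitian elements form a copy of $A^{(+)}$. Finally, the trivial cases of bounded and linear growth are realized by finite-dimensional algebras and by $\Bbbk[t]^{(+)}$.
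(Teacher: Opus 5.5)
There are two genuine gaps.

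\textbf{The inclusion of the third class into the second.} Your opening reduction is wrong: Bell and Zelmanov \cite{BZ} do \emph{not} characterize associative growth functions as the increasing submultiplicative functions $\succeq n^2$. Their criterion is the discrete-derivative condition $f'(n)\geq n+1$ and $f'(m)\leq f'(n)^2$ for $m\in[n,2n]$. Associative growth functions are easily seen to be submultiplicative. The converse inclusion (class three into class two) is exactly Zelmanov's question, which fails for the finer equivalence $\approx$ \cite{GaApp}. Proving it up to $\sim$ is a substantive part of this theorem, not a citation, and your phrase ``check that the form of the condition Bell and Zelmanov need is implied by this'' is precisely this missing step.

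The paper handles it by a regularization. Take $g$ increasing and submultiplicative with $g'(n)\geq 2n$, and set
\[
d_k=\frac{g(2^{k+1})-g(2^k)}{2^k},\qquad \beta_k=\sup_{i\geq 0} d_{k+i}^{2^{-i}},\qquad h'(n)=\lceil \beta_{\lfloor \log_2 n\rfloor}\rceil .
\]
Then $\beta_{k+1}\leq \beta_k^2$ gives the Bell--Zelmanov condition, and $g\preceq h$ is immediate. The bound $h\preceq g$ uses submultiplicativity of $g$ to control $d_{m+i}^{2^{-i}}$, treating $i=0$, $i=1$ and $i\geq 2$ separately.

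\textbf{The Jordan side.} Neither of your routes works as stated. You give no argument for the derivative inequality. Knowing that $J$ is spanned by operator words in $R_x, R_{x,y}$ applied to generators bounds $\gamma_J$ by the growth of the multiplication algebra $M(J)$, not by $\gamma_J$ itself.

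The fallback $\gamma_J(n+m)\leq C\gamma_J(Dn)\gamma_J(Dm)$ is vacuous for $D\geq 2$. For $n\geq m$, any non-decreasing function with values $\ge 1$ satisfies $\gamma(n+m)\leq \gamma(2n)\le\gamma(Dn)$. For example, a function that grows quadratically on long stretches with rare enormous jumps satisfies it, yet is not $\sim$ any submultiplicative function.

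What you need is an inequality in which one argument is not rescaled. The paper gets it in two pieces:
\begin{enumerate}
\item The polarized identity $L_{(ab)c}=L_{ab}L_c+L_{bc}L_a+L_{ac}L_b-L_bL_cL_a-L_aL_cL_b$ shows that every $L_w$ is a combination of products of $L_{x_i}$ (weight $1$) and $L_{x_ix_j}$ (weight $2$) of total weight $\deg w$. Hence $J_{\leq n+m}\subseteq V^n\cdot J_{\leq m}$ and $\gamma_J(n+m)\leq \gamma_{M(J)}(n)\gamma_J(m)$.
\item The reverse control $\gamma_{M(J)}(n)\leq C\gamma_J(2n)^2$ from \cite[Lemma~2]{MZ22}. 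You state this direction the other way round; it is $\gamma_J$ that must control $\gamma_{M(J)}$.
\end{enumerate}
Together these give $f(n+m)\leq f(2n)^2 f(m)$ for $f=C\gamma_J$. The doubling is absorbed by passing to $\max\{f(n),\lceil\sqrt{\sup_{t}f(n+t)/f(t)}\rceil\}\leq f(2n)$. Then $g(N)=\min\{f(n_1)\cdots f(n_k): n_1+\dots+n_k\geq N\}$ is submultiplicative with $g(N)\leq f(N)\leq g(3N)$.

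Your associative-to-Jordan direction (the Jordan subalgebra of $A^{(+)}$ generated by $X$, plus Mart\'inez--Zelmanov) is fine.
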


\subsection*{Methodology and AI assistance}
AI assistance (ChatGPT 5.6 Sol and 6.0), brought below in detail, has been used to help develop certain combinatorial observations, constructions, and proof arguments and, importantly, to propose discrete versions of concepts discussed in various papers in analysis, which turned out to be useful in our proof of Theorem \ref{thm:Jordan}.

During the discussion on the proof strategy for Theorem \ref{thm:Lie}, AI pointed out the simple but useful observation that one loses `one degree of $N$' when considering Lie monomials with a restricted number of occurrences of one of the letters, which allows us to `work in strips' (as in Equations \eqref{eq:for intro},\eqref{eq:for intro2} therein). The idea to `pump' a module with prescribed growth, using an action of a Lie algebra coming from an associative algebra of polynomial growth is by the author, and its manifestation through the framework of a semidirect product was suggested by AI.

In the proof of Theorem \ref{thm:Jordan}, AI was used to recall some standard facts about Jordan algebras (in Step I); to suggest using a discrete version of an analytic notion from \cite{GrochenigWeights} used in the modification $\hat{f}$ in Equation \eqref{eq:Jordan4.5}; to suggest using a multiplicative version of the definition given in \cite{GoswamiPales} in an additive form (which we used in Step II); and to suggest using the discrete version of the majorant defined in \cite{SinnamonMonotonicity}, which allows the `regularization' process (Step III), and finally, to suggest treating separately the cases $i(m)=0,1,\geq 2$ (as carried out by the author in \eqref{eq:Jordan10},\eqref{eq:Jordan11},\eqref{eq:Jordan12}). 

The overall strategy of the proof and organization and flow of the arguments were developed by the author. The proofs, and the entire paper, were written by the author, who is responsible for their validity.

\section{Preliminaries}

Let $\Bbbk$ be a field of characteristic different from two. Let $X=\{x_1,\dots,x_d\}$ and let $X^*=\bigcup_{n=0}^{\infty} X^n$ (where $X^0=\{1\}$ is the empty word). Let $\mathcal{F} = \Bbbk\langle X \rangle$ be the free associative algebra generated by $X$ and let $\mathfrak{F} \subset \mathcal{F}$ be the free Lie algebra generated by $X$, viewed as a subspace of $\mathcal{F}$. We denote the natural $\mathbb{N}$-gradings of $\mathcal{F}$ and $\mathfrak{F}$ by $\mathcal{F}=\bigoplus_{n=0}^{\infty} \mathcal{F}_n$ and $\mathfrak{F}=\bigoplus_{n=0}^{\infty} \mathfrak{F}_n$, respectively. 
Given a subspace $V$ of an associative algebra, let $V^n$ denote the subspace spanned by all $n$-fold products of elements from $V$ and $V^{\leq n} = V+V^2+\dots+V^n$. For a Lie algebra, we use the notation $V^{[n]}$ and $V^{[\leq n]}$ to stress that we are considering Lie brackets.

Given a subset $S\subseteq X^*$, denote by $S^{\her}$ its hereditary closure, namely, the set of all factors of words from $S$. Given a hereditary language $\mathcal{X}\subseteq X^*$, $\Span_\Bbbk \left( X^* \setminus \mathcal{X} \right) \triangleleft \Bbbk\langle X\rangle$, and we denote by $A_\mathcal{X} = \Bbbk\langle X\rangle / \Span_\Bbbk \left( X^* \setminus \mathcal{X} \right)$ the monomial algebra associated with $\mathcal{X}$; this algebra is graded by the word length, and we denote $A_\mathcal{X} = \bigoplus_{n=0}^{\infty} \left( A_\mathcal{X} \right)_n$. The growth function of $A_\mathcal{X}$ (with respect to $X$) is given by counting the number of words in $\mathcal{X}$ of length at most $n$.

Given an associative algebra $A$, we denote by $A^{(-)}=(A,[\cdot,\cdot])$ the Lie algebra structure on $A$ given by the Lie brackets $[x,y]=xy-yx$, and $[A,A]\subseteq A^{(-)}$ is the Lie subalgebra spanned by all Lie brackets of elements from $A^{(-)}$. If $A$ is generated by a finite number of nilpotent elements then $[A,A]$ is finitely generated and shares the same growth function as $A$ \cite{AlahmadiAlharthi}. 

Recall that a Jordan algebra is a non-associative algebra satisfying $x\circ y=y\circ x$ (commutativity) and $(x\circ x)\circ (x\circ y)=x\circ((x\circ x)\circ y)$ (Jordan identity). 
Given an associative algebra $A$, we denote by $A^{(+)}=(A,\circ)$ the Jordan algebra structure on $A$ given by $x\circ y = \frac{1}{2}(xy+yx)$.

If $A$ is a finitely generated associative, Lie, or Jordan algebra, generated by a finite set $X$, then it admits a natural filtration by $F_n(A)=\Span_\Bbbk X^{\leq n}$. The associated graded algebra $\gr_X(A) = \bigoplus_{n=0}^{\infty} F_{n}(A)/F_{n-1}(A)$ (where $F_{-1}(A)=\{0\}$ and $F_0(A)=\{0\}$ in the case of Lie algebras, and $F_0(A)=\Bbbk$ in the case of unital associative/Jordan algebras) is $\mathbb{N}$-graded and generated by its degree-$1$ component.

\section{Lie algebras}

\subsection{Lie algebras of polynomial growth} In this part, we prove Theorem \ref{thm:Lie}. We start with a brief description of the construction idea.

\medskip

\noindent \emph{Proof Sketch.}
The core idea is to `pump' a Lie algebra $H$ arising from an associative monomial algebra acting on a monomial module $M$. Monomial modules of prescribed growth are relatively easy to construct, and can be glued together with $H$ by a semidirect product type construction. The main point is to ensure that the dominant contribution to the growth of the resulting Lie algebra comes from the module part, rather than from the acting Lie algebra $H$. 
This is possible thanks to an inherent deficit in the growth of the relevant Lie subalgebras compared with that of their ambient associative algebras. In short, and in a very simplified form, if we consider a monomial associative algebra generated by $x,y$, in which the non-zero monomials are precisely those containing at most $q$ occurrences of $y$, its growth is $\sim n^{q+1}$. But the growth of the Lie subalgebra generated by $x,y$ is strictly smaller: when counting left-normed iterated Lie brackets of length at least two, we may assume, up to sign, that they start with $y$ (otherwise, they start with $[x,x]=0$). 
Loosely speaking, this allows us to `work in strips': we construct the module $M$ and the associated Lie algebra $H$ inductively, each time pumping the module to match the magnitude of the desired growth function, assuming it behaves locally between $n^q$ and $n^{q+1}$. We take advantage of the fact that we can realize $H$ locally as a subalgebra of a monomial algebra as above, with at most $q$ occurrences of $y$, thus restricting its growth to $O(n^q)$, which is dominated by the module part.

\medskip

Following \cite{GZ_module}, we define monomial modules. 
Let $S(0)=\{1\}$ and, for each $n\geq 1$, let $S(n)\subseteq X\cdot S(n-1)$. Thus $\mathcal{S} = \bigcup_{n\geq 0} S(n) \subseteq X^*$ is a (not necessarily hereditary) formal language closed under taking suffixes, and it forms a $\Bbbk$-linear basis of a graded cyclic left $\mathcal{F}$-module $M=\Span_\Bbbk \mathcal{S}$. 
The growth function of $M$ is given by $\gamma_M(n)=\sum_{i=0}^{n} |S(i)|$. 

Let $\psi \colon \mathcal{F} \rightarrow \End_\Bbbk(M)$ be the natural homomorphism induced by the action $\mathcal{F} \curvearrowright M$. 
Let $H = \psi\left(\mathfrak{F}\right)$ and denote by $S_{x_1},\dots,S_{x_d}$ the images of $x_1,\dots,x_d$ under $\psi$. Notice that $H$ is generated as a Lie algebra by $S_{x_1},\dots,S_{x_d}$. Let $U=\Span_\Bbbk \{S_{x_1},\dots,S_{x_d}\}$ and let 
\[ 
\gamma_H(n) = \dim_\Bbbk U^{[\leq n]} = \dim_\Bbbk \psi\left(\bigoplus_{i=1}^{n} \mathfrak{F}_i \right)
\]
be the growth function of $H$. 

We define a Lie algebra $L$ which, as a vector space, is
\[
L = H \oplus M
\]
with Lie brackets given by $[(h,m),(h',m')]=([h,h'],h(m')-h'(m))$. 
In other words, $L$ is the semidirect product $H\ltimes M$. Notice that we can view 
\[
L \cong \left(\begin{matrix} H & M \\ 0 & 0 \end{matrix} \right) \subseteq \left(\begin{matrix} \End_\Bbbk(M) & M \\ 0 & 0 \end{matrix} \right)
\]
with Lie brackets induced by the associative structure of the latter.
\begin{prop} \label{prop:growth L}
The Lie algebra $L$ is generated by 
\[ 
\sigma_i := (S_{x_i},0),\ i=1,\dots,d;\ e := (0,1)
\]
and the growth function of $L$ with respect to this generating set satisfies
\[ \gamma_{L}(n) = 
\gamma_H(n) + \gamma_M(n-1). \]
\end{prop}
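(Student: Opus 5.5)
We will compute the span $V_n := \Span_\Bbbk W^{[\leq n]}$ explicitly, where $W=\Span_\Bbbk\{\sigma_1,\dots,\sigma_d,e\}$. The goal is the decomposition
\[
V_n = U^{[\leq n]} \oplus \Span_\Bbbk\Big(\textstyle\bigcup_{i=0}^{n-1} S(i)\Big),
\]
with the first summand inside $H\oplus 0$ and the second inside $0\oplus M$. Since the sum $L=H\oplus M$ is direct, the dimension formula $\gamma_L(n)=\gamma_H(n)+\gamma_M(n-1)$ then follows at once. Generation of $L$ also follows: $H$ is generated by the $S_{x_i}$, and $M$ is spanned by $\mathcal{S}$, which will lie in the span of the brackets.

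We first record the bracket rules: $[\sigma_i,\sigma_j]=([S_{x_i},S_{x_j}],0)$, $[(h,0),(0,m)]=(0,h(m))$, and $[(0,m),(0,m')]=0$. Consider a Lie monomial of length $k$ in the generators. If $e$ occurs at least twice, the monomial vanishes. To see this, note that $0\oplus M$ is an abelian ideal, so every bracket with a factor in $0\oplus M$ lies in $0\oplus M$. Hence a bracket of two subterms that both contain $e$ is a bracket of two elements of $0\oplus M$, which is $0$. Induction on the bracket tree then shows that some such pair occurs. If $e$ does not occur, the monomial is $(u,0)$ with $u\in U^{[k]}$, and all of $U^{[k]}$ is obtained this way.

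The remaining case is that $e$ occurs exactly once. By induction on the tree, such a monomial equals $(0,\pm h_1h_2\cdots h_r(1))$ up to sign. Here each $h_j$ is a Lie monomial in the $S_{x_i}$, and the total degree is $k-1$. Expanding each $h_j$ as a linear combination of associative words in the $S_{x_i}$, the monomial becomes a linear combination of $\psi(w)(1)$ with $w\in X^{k-1}$. Each $\psi(w)(1)$ is either the word $w$ (if $w\in\mathcal{S}$) or $0$, because $M$ is a monomial module: $x\cdot s$ is $xs$ if $xs\in\mathcal{S}$ and $0$ otherwise. So the monomial lies in $\Span_\Bbbk S(k-1)$.

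For the converse inclusion we use left-normed brackets. Given $s=x_{i_1}\cdots x_{i_{k-1}}\in S(k-1)$, we have
\[
[\sigma_{i_1},[\sigma_{i_2},\dots,[\sigma_{i_{k-1}},e]\cdots]]=(0,s).
\]
This holds because $\mathcal{S}$ is suffix-closed, so each intermediate element is a suffix of $s$ and therefore a basis element, and the action is left multiplication. Hence $0\oplus\Span_\Bbbk S(k-1)\subseteq V_n$ for every $k\leq n$. Combining the cases gives the decomposition of $V_n$, and taking dimensions proves the formula.

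The main obstacle is keeping track of the mixed monomials. We must show they land in exactly $S(k-1)$, which is the source of the shift $n-1$, and that their contribution splits off cleanly from the $H$-part. Both points are handled by the ideal property of $0\oplus M$ and by the grading of $M$ by word length.
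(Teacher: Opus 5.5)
Your proof is correct and establishes the same decomposition $V^{[\leq n]}=U^{[\leq n]}\oplus\Span_\Bbbk\bigcup_{i=0}^{n-1}S(i)$ as the paper, from the same ingredients (the abelian ideal $0\oplus M$, the monomial action, suffix-closedness); the paper runs an induction on $n$ by bracketing the generators against $V^{[\leq n]}$, while you classify arbitrary bracket monomials by the number of occurrences of $e$, which is the same computation organized differently. One cosmetic slip: the bracket $[\sigma_{i_1},[\sigma_{i_2},\dots,[\sigma_{i_{k-1}},e]\cdots]]$ you use is right-normed, not left-normed in the paper's convention $[a_1,\dots,a_d]=[\cdots[[a_1,a_2],a_3],\dots]$.
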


\begin{proof}
Let $V=\Span_\Bbbk \{\sigma_{1},\dots,\sigma_{d},e\}$. We prove that
\begin{equation} \label{eqqq}
V^{[\leq n]} =  U^{[\leq n]} \oplus  \Span_\Bbbk \bigcup_{i=0}^{n-1} S(i).
\end{equation}
For $n=1$ this is clear by definition of $V$. Given $u\in U^{[\leq n]},w\in S(i),0\leq i\leq n-1,1\leq j\leq d$ we have
\[
[ (0,1) , (u , w)] =  (0 , -u\cdot 1) \in 0\oplus \Span_\Bbbk \bigcup_{i=0}^{n} S(i) 
\]
\[
[ (S_{x_j} , 0) , ( u , w ) ] =  ( [S_{x_j},u] , x_j \cdot w) \in  U^{[\leq n+1]} \oplus  \Span_\Bbbk \bigcup_{i=0}^{n} S(i)
\]
and further, since 
\[
U^{[\leq n+1]}=U^{[\leq n]}+[S_{x_1},U^{[\leq n]}]+\dots+[S_{x_d},U^{[\leq n]}],
\]
and $S(n)\subseteq x_1 S(n-1)\cup \dots \cup x_d S(n-1)$, \eqref{eqqq} follows. Consequently, $\gamma_{L}(n) = \gamma_H(n) + \gamma_M(n-1)$.
\end{proof}

Let $\varphi\colon \mathcal{F} \rightarrow A_{\mathcal{S}^{\her}}$ denote the natural surjection.

\begin{prop} \label{prop:growth H}
We have $\gamma_H(n) = \sum_{m=1}^{n} \dim_\Bbbk \varphi(\mathfrak{F}_m)$.
\end{prop}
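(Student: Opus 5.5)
The plan is to compare the two maps $\psi\colon \mathcal{F}\to \End_\Bbbk(M)$ and $\varphi\colon \mathcal{F}\to A_{\mathcal{S}^{\her}}$ directly. I will show that $\Ker\psi=\Ker\varphi=\Span_\Bbbk(X^*\setminus \mathcal{S}^{\her})$. Then, restricted to each graded piece $\mathfrak{F}_m$, the two maps have the same kernel, so $\dim_\Bbbk\psi(\mathfrak{F}_m)=\dim_\Bbbk\varphi(\mathfrak{F}_m)$. I also need $\psi(\bigoplus_{m=1}^n\mathfrak{F}_m)=\bigoplus_{m=1}^n\psi(\mathfrak{F}_m)$, i.e. that the images of distinct degrees are independent. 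Since $\gamma_H(n)=\dim_\Bbbk\psi(\bigoplus_{m=1}^n\mathfrak{F}_m)$ by definition, these two facts give the claimed formula.

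\textbf{Step 1: the kernel of $\psi$ is spanned by monomials.} For a word $w\in X^*$ and a basis element $s\in\mathcal{S}$, the action gives $w\cdot s=ws$ if $ws\in\mathcal{S}$, and $0$ otherwise. Here I use that $M$ is the cyclic module with basis $\mathcal{S}$ and that $x_j\cdot s$ is defined as $x_js$ or $0$, consistent with the suffix-closed construction. For fixed $s$, distinct words $w$ send $s$ to distinct basis vectors $ws$ or to $0$. Hence for $a=\sum_w c_w w$ we get $\psi(a)(s)=\sum_{w:\,ws\in\mathcal{S}}c_w\, ws$, and this vanishes iff $c_w=0$ for all $w$ with $ws\in\mathcal{S}$. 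Therefore $\psi(a)=0$ iff $c_w=0$ for every $w$ admitting some $s$ with $ws\in\mathcal{S}$, i.e. for every $w$ that is a prefix of a suffix of an element of $\mathcal{S}$.

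\textbf{Step 2: identify those words with $\mathcal{S}^{\her}$.} Since $\mathcal{S}$ is suffix-closed, $ws\in \mathcal{S}$ for some $s\in\mathcal{S}$ iff $w$ is a prefix of some element of $\mathcal{S}$. Every factor $u$ of an element $vuv'$ of $\mathcal{S}$ is a prefix of the suffix $uv'\in\mathcal{S}$. So this condition is exactly $w\in\mathcal{S}^{\her}$, and $\Ker\psi=\Span_\Bbbk(X^*\setminus\mathcal{S}^{\her})=\Ker\varphi$ by the definition of $A_{\mathcal{S}^{\her}}$. Consequently $\psi(\mathfrak{F}_m)\cong \mathfrak{F}_m/(\mathfrak{F}_m\cap\Ker\psi)=\mathfrak{F}_m/(\mathfrak{F}_m\cap \Ker\varphi)\cong\varphi(\mathfrak{F}_m)$.

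\textbf{Step 3: the sum over degrees is direct.} $\psi(\mathcal{F}_m)$ maps $M_i$ into $M_{i+m}$, so the operators of different degrees are homogeneous of different degrees on the graded module $M$. A sum of such operators vanishes only if each homogeneous component does. Alternatively, $\Ker\psi$ is a homogeneous (monomial) ideal, so $\psi(\mathcal{F})\cong\mathcal{F}/\Ker\psi$ is graded. Either way, $\dim\psi(\bigoplus_{m\le n}\mathfrak{F}_m)=\sum_{m\le n}\dim\psi(\mathfrak{F}_m)$.

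The only delicate point is Step 1: the claim that distinct words $w$ with $ws\in\mathcal{S}$ yield linearly independent images. This follows because the images $ws$ are distinct elements of the basis $\mathcal{S}$, and this is precisely where the monomial nature of $M$ is used. The rest is bookkeeping.
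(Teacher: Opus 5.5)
Your proof is correct and follows essentially the same route as the paper. Both arguments identify $\Ker\psi$ with $\Ker\varphi=\Span_\Bbbk(X^*\setminus\mathcal{S}^{\her})$, using that distinct words send a fixed basis element to distinct basis elements (or to zero), and that, by suffix-closure, every factor of a word in $\mathcal{S}$ is a prefix of a word in $\mathcal{S}$. Your Step 3 spells out that the images of different degrees are independent, which the paper leaves implicit (there it follows from the grading of $A_{\mathcal{S}^{\her}}$ once $\psi$ factors through $\varphi$ via an isomorphism).
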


\begin{proof}
Recall that $\gamma_H(n) = \dim_\Bbbk U^{[\leq n]},\ U=\Span_\Bbbk \{S_{x_1},\dots,S_{x_d}\}$. Consider the map $f\colon \mathcal{F} \rightarrow \End_\Bbbk(M)$ given by $f(x_1)=S_{x_1},\dots,f(x_d)=S_{x_d}$. We claim that $\Ker(f)=\Ker(\varphi)$ and hence $f=g\circ \varphi$ with an isomorphism $g$ onto the image of $f$, establishing the desired equality.

First, if $u=x_{i_1}\dots x_{i_k}\in \mathcal{F}$ is a monomial vanishing in $A_{\mathcal{S}^{\her}}$ (namely, $\varphi(u)=0$) then $u$ does not occur as a subword of any word from $\mathcal{S}$, and therefore $S_{x_{i_1}}\circ \dots \circ S_{x_{i_k}}(w)=x_{i_1}\dots x_{i_k} w$ cannot be in $\mathcal{S}$, so it is equal to zero in $M$; hence $f(u) =S_{x_{i_1}}\circ \dots \circ S_{x_{i_k}}=0$.

Conversely, suppose that $\xi = \sum_{i=1}^{t} c_i u_i$ lies in $\Ker(f)$, and is written as a sum of distinct monomials with non-zero coefficients. In particular, for every word $w\in X^*$, we have that $\sum_{i=1}^{t} c_i u_i w$ is zero in $M$. Since $\mathcal{S}$ forms a basis of $M$, and all $u_1w,\dots,u_tw$ are distinct, it follows that none of $u_1w,\dots,u_tw$ belongs to $\mathcal{S}$. 
Fix $1\leq i\leq t$ and suppose that $u_i$ occurs as a subword of some word $v\in \mathcal{S}$, say, $v=u' u_i u''$. Then $u_i u''\in \mathcal{S}$ as $\mathcal{S}$ is closed under suffixes, contradicting the previous assertion. Hence $u_1,\dots,u_t\in X^*\setminus \mathcal{S}^{\her}$, so they are all congruent to zero in $A_{\mathcal{S}^{\her}}$ and $\varphi(\xi)=0$.
\end{proof}

We now specialize to $d=2$ and work over the alphabet $X=\{x,y\}$.

Let
\[
\L(N,n) := \Big\{\substack{\text{Words of length}\ N\ \text{with any two occurrences} \\ \text{of}\ y\ \text{separated by at least}\ n-1\ \text{occurrences of}\ x}\Big\}
\]
and let $L(N,n)=|\L(N,n)|$.
Observe that, fixing $n$, $L(-,n)$ is monotone non-decreasing. Notice that $\L(N,n)$ is the collection of sparse words denoted $T(n-1,N)$ in \cite{BZ}.

\begin{lem} \label{lem:L}
For each $c\in \mathbb{N}$, we have 
\begin{enumerate}
    \item $L(cn,n)\leq (n+1)^c$
    \item $L(cn,n) \geq {n+c-1 \choose c} \geq \frac{n^c}{c!}$
    \item $\sum_{j=1}^{cn} L(j,n) \geq \frac{1}{(c+1)!} n^{c+1}$.
\end{enumerate}
\end{lem}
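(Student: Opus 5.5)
Parts (1) and (2) follow from two simple encodings of words in $\L(cn,n)$, and part (3) follows from (2) together with the monotonicity of $L(-,n)$.

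For the upper bound (1), split a word $w\in\L(cn,n)$ into $c$ consecutive blocks of length $n$. Within a block of length $n$, any two occurrences of $y$ would be separated by at most $n-2$ letters. This is fewer than the required $n-1$ occurrences of $x$, so each block contains at most one $y$. A block is therefore determined by the position of its $y$, or by the absence of a $y$, which gives at most $n+1$ choices per block. Since $w$ is the concatenation of its blocks, the block decomposition is injective, and $L(cn,n)\leq (n+1)^c$.

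For the lower bound (2), construct many valid words by placing exactly $c$ letters $y$. Choose positions $1\leq p_1<p_2<\dots<p_c\leq cn$ with $p_{k+1}-p_k\geq n$, so that consecutive $y$'s have at least $n-1$ letters $x$ between them. Substitute $q_k=p_k-(k-1)(n-1)$. This gives a bijection onto strictly increasing sequences $1\leq q_1<\dots<q_c\leq cn-(c-1)(n-1)=n+c-1$. The number of such words is therefore ${n+c-1\choose c}$, and the map $(p_k)\mapsto$ word is injective, so $L(cn,n)\geq{n+c-1\choose c}$. Finally,
\[
{n+c-1\choose c}=\frac{n(n+1)\cdots(n+c-1)}{c!}\geq\frac{n^c}{c!},
\]
since each factor in the numerator is at least $n$. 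The case $c=0$ is trivial: the empty word gives $L(0,n)=1$.

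For (3), use that $L(-,n)$ is non-decreasing. For $j\in[(c'-1)n+1,\,c'n]$ with $c'\geq 1$, we have $L(j,n)\geq L((c'-1)n,n)\geq n^{c'-1}/(c'-1)!$ by (2). Summing over $c'=1,\dots,c+1$ (the top block of indices is available only up to $cn$), the cleanest choice is the single top block $j\in((c-1)n,cn]$. This gives $\sum_{j\le cn}L(j,n)\geq n\cdot n^{c-1}/(c-1)!$, which has the wrong exponent: $n^{c}$ rather than $n^{c+1}$. So monotonicity alone is not enough, and the estimate needs more care.

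To get the exponent $c+1$, count words of length $j\leq cn$ with exactly $c$ letters $y$ and take the union over $j$. The same substitution shows that a word of length $j$ with $c$ spaced $y$'s corresponds to $c$-subsets of $\{1,\dots,j-(c-1)(n-1)\}$. Hence
\[
\sum_{j\le cn}L(j,n)\geq\sum_{m\leq n+c-1}{m\choose c}={n+c\choose c+1}\geq\frac{n^{c+1}}{(c+1)!},
\]
where the middle equality is the hockey-stick identity. This is the step I expect to be the main obstacle: the naive use of monotonicity loses a factor of $n$, and the correct summation over lengths is what recovers it.
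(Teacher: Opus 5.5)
Your proof is correct and is essentially the paper's: (1) and (2) use the same block decomposition and the same shift $q_k\mapsto q_k+(k-1)(n-1)$. For (3), your per-length count summed by the hockey-stick identity is the paper's single injection unpacked, because the paper sends $a_1<\dots<a_{c+1}\le n+c$ to the word of length $a_{c+1}-1+(c-1)(n-1)$ with $y$ in positions $a_k+(k-1)(n-1)$ for $k\le c$, so its largest element $a_{c+1}$ plays the role of your $m+1$; delete the abandoned monotonicity paragraph, and note that (3) needs $c\geq 1$, since for $c=0$ the left-hand side is an empty sum.
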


\begin{proof}
\begin{enumerate}
    \item Let $w\in \L(cn,n)$ and write $w=w_1\dots w_c$ with each $w_i$ of length $n$. Notice that each $w_i$ can contain at most one occurrence of $y$. We thus have at most $n+1$ options for the position of $y$ (including its absence): either as one of its $n$ letters, or none at all. In total, we have at most $(n+1)^c$ options.
    
    \item Given $1\leq a_1<\dots<a_c\leq n+c-1$, consider $a_1,a_2+(n-1),a_3+(2n-2),\dots,a_c+(c-1)(n-1)$ and notice that $a_c+(c-1)(n-1)\leq n+c-1+(c-1)(n-1) = cn$, and so the length-$cn$ word in $x,y$ with $y$ occurring in the above positions is in $\L(cn,n)$.

    \item Given $1\leq a_1<\dots<a_{c+1}\leq n+c$, consider a word of length $n+c$ with $y$ precisely in these positions. Space out $a_1,\dots,a_c$ by adding $n-1$ occurrences of $x$ between each two. Then truncate the resulting word by deleting the letters starting from the last occurrence of $y$. More formally, consider $b_i:=a_i+(i-1)(n-1)$ for all $1\leq i\leq c$, and $b_{c+1}:=a_{c+1}+(c-1)(n-1)$. Notice that $b_{c+1}\leq n+c+(c-1)(n-1)\leq cn+1$, so if we look at the word of length $b_{c+1}-1$ with $y$ in positions $b_1,\dots,b_c$, we obtain a word in $\bigcup_{j=1}^{cn} \L(j,n)$. This assignment is injective, proving the required inequality.
\end{enumerate}    
\end{proof}

Let $f\colon \mathbb{N}\rightarrow \mathbb{N}$ be an increasing, polynomially bounded function. Up to $\sim$, we may add to $f$ a linear term, making $f'(n)=f(n)-f(n-1)\geq 2$ for all $n$.

Fix $q\in \{2,3,4,\dots\}$ large enough such that, for some $N\in \mathbb{N}$, 
\[
f(n) <  \frac{1}{q^q q!} n^q
\]
for all $n\geq N$. 
Let $N_1 > \max\{N,q\}$ 
and notice that for every $n > q$ we have
\[
n < \frac{1}{2} n^2 <\frac{1}{6} n^3 <\dots < \frac{1}{q!} n^q.
\]
Now, for each $i$, we define $N_{i+1} = q N_i = q^{i} N_1$. 
Given $N_i$, let $\Delta_{i} := f(N_{i+1}) - f(N_i)$. Notice that 
\[ (q-1) N_i  \leq \sum_{j=N_i+1}^{N_{i+1}} f'(j) = \Delta_i \leq f(N_{i+1})=f(qN_i) < \frac{N_i^q}{q!}
\]
so there exists some $2\leq q_i\leq q$ such that 
\begin{equation} \label{eq:for intro}
\frac{1}{(q_i-1)!} N_i^{q_i-1} \leq \Delta_i < \frac{1}{q_i !}N_i^{q_i}.
\end{equation}
We find some $1 \leq X \leq (q_i - 1) N_i$ such that
\begin{equation} \label{eq:for intro2}
\sum_{j=1}^{X-1} L(j,N_i) \leq \Delta_i < \sum_{j=1}^{X} L(j,N_i).
\end{equation}
This is possible thanks to Lemma \ref{lem:L}, as 
\[
\sum_{j=1}^{(q_i - 1)N_i} L(j,N_i)\geq \frac{1}{q_i !} N_i^{q_i} > \Delta_i.
\]
And, on the other hand, $L(1,N_i)=| \{x,y\} |=2\leq f'(N_{i+1}) \leq \Delta_i$.

We pick an arbitrary subset $\L_0(X,N_i) \subseteq \L(X,N_i)$ such that
\[
\L(1,N_i) \cup \dots \cup \L(X-1,N_i) \cup \L_0(X,N_i)
\]
is of size $\Delta_i$. We let $\L_{N_1} = \{x^{N_1}\}$ and 
\begin{eqnarray*}
&&\L_{N_i + j} =  \L(j,N_i) x^{N_i}\subseteq \{x,y\}^{N_i+j}\ \text{for}\ 1\leq j\leq X-1 \\ 
&&\L_{N_i + X} =  \L_0(X,N_i) x^{N_i} \subseteq \{x,y\}^{N_i+X} \\
&&\L_{m} = \emptyset\ \text{for all}\ N_i+X+1\leq m\leq N_{i+1}.    
\end{eqnarray*}
(Notice that $X\leq (q_i-1)N_i$ so $N_i+X\leq q_i N_i\leq q N_i$.)

Let \[ \mathcal{S} := \{1,x,x^2,\dots\} \cup  \bigcup_{m=N_1}^{\infty} \L_{m} \]
\begin{claim}
The set $\mathcal{S}$ is right hereditary, that is, closed under suffixes.
\end{claim}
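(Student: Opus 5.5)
The plan is to check the definition directly. Take an arbitrary word $w\in\mathcal{S}$ and an arbitrary suffix $w'$ of $w$, and show $w'\in\mathcal{S}$. The empty word $1$ and all powers of $x$ lie in $\mathcal{S}$ by definition. Suffixes of powers of $x$ are again powers of $x$, so the only case needing work is $w\in\L_m$ for some $m\geq N_1$.

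First I record that the blocks are well defined. The index ranges $(N_i,N_{i+1}]$ are pairwise disjoint, and $N_i+X\leq qN_i=N_{i+1}$. So each length $m>N_1$ determines a unique $i$ and a unique $1\leq j\leq N_{i+1}-N_i$ with $m=N_i+j$. Hence a nonempty $\L_m$ with $m>N_1$ consists of words $u\,x^{N_i}$, where $u\in\L(j,N_i)$ and $1\leq j\leq X-1$, or $u\in\L_0(X,N_i)\subseteq\L(X,N_i)$ and $j=X$. The case $m=N_1$ gives only $x^{N_1}$, which is a power of $x$.

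Now let $w=u\,x^{N_i}$ with $|u|=j\leq X$. There are two kinds of suffix $w'$:
\begin{enumerate}
\item If $|w'|\leq N_i$, then $w'$ is a suffix of $x^{N_i}$, hence a power of $x$, so $w'\in\mathcal{S}$.
\item Otherwise $w'=u'\,x^{N_i}$, where $u'$ is a nonempty suffix of $u$ of length $j'$. The case $j'=j$ is $w$ itself, so assume $1\leq j'<j$.
\end{enumerate}

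The key observation for the second case is that the defining condition of $\L(\cdot,N_i)$ is inherited by factors. If every two occurrences of $y$ in $u$ are separated by at least $N_i-1$ occurrences of $x$, the same holds in any factor of $u$. Hence $u'\in\L(j',N_i)$.

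Since $j'\leq j-1\leq X-1$, we are in the range where the full set is used: $\L_{N_i+j'}=\L(j',N_i)\,x^{N_i}$, and no truncation to $\L_0$ occurs. Therefore $w'=u'x^{N_i}\in\L_{N_i+j'}\subseteq\mathcal{S}$.

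The only delicate point is this last one: it matters that only the top length $X$ was truncated to the arbitrary subset $\L_0(X,N_i)$. Any proper suffix has strictly shorter $u$-part, so it lands in a length where the whole of $\L(j',N_i)$ is included. Everything else is bookkeeping on the index ranges.
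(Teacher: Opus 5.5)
Your proof is correct and follows essentially the same route as the paper. The separation condition defining $\L(\cdot,N_i)$ passes to suffixes, the tail $x^{N_i}$ takes care of short suffixes, and only the top length $N_i+X$ is truncated to $\L_0(X,N_i)$. The one cosmetic difference is that the paper checks only the length-$(m-1)$ suffix (showing it lies in $\L_{m-1}\cup\{x^{m-1}\}$) and lets iteration handle the rest, whereas you treat an arbitrary suffix directly.
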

\begin{proof}
We prove that, for each $m>N_1$, the length $m-1$ suffix of every word in $\L_m$ belongs to $\L_{m-1} \cup \{x^{m-1}\}$. If $m=N_i+1$ then this follows since $\L_{N_i+1} \subseteq \{x,y\}\cdot x^{N_i}$. If $m=N_i+j,\ 1<j\leq X$ then every word in $\L_{N_i+j}$ takes the form $ux^{N_i}$ where $u$ is a length-$j$ word in which occurrences of $y$ are separated by at least $N_i-1$ occurrences of $x$. A suffix of $u$ also enjoys the same separation property, so the length-$N_i+j-1$ suffix of $ux^{N_i}$ lies in $\L(j-1,N_i)x^{N_i} = \L_{N_i+j-1}$. For $N_i+X < m \leq N_{i+1}$, we have $\L_m=\emptyset$ and the claim holds evidently.
\end{proof}

\begin{claim} \label{claim:growth M}
For every $i\geq 1$,
\[ f(N_i) - C \leq \gamma_M(N_i) \leq  f(N_i) + N_i + 1 \]
for some constant $C>0$.
\end{claim}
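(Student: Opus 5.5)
We count $\gamma_M(N_i)=\sum_{m=0}^{N_i}|S(m)|$ directly from the definition of $\mathcal{S}$, splitting the words into the powers of $x$ and the words coming from the sets $\L_m$, $m\geq N_1$. The plan is a telescoping argument over the blocks $(N_k,N_{k+1}]$.

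First we clarify the overlap. The word $x^m$ lies in $\L(j,N_k)$ for every $j$, so $x^{N_k+j}\in\L_{N_k+j}$ whenever $1\leq j\leq X$ (for $j=X$ we may assume it was placed in $\L_0$). We also have $\L_{N_1}=\{x^{N_1}\}$. Hence it is cleanest to write
\[
\gamma_M(N_i)=\#\{x^m: 0\le m\le N_i\}+\sum_{m=N_1}^{N_i}\bigl|\L_m\setminus\{x^m\}\bigr|
\]
and to compute $\sum_{m=N_1}^{N_i}|\L_m|$ separately. The first term is $N_i+1$, and it lies within a linear error of the main term.

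The block computation comes next. For each $k<i$, the nonempty $\L_m$ with $N_k<m\le N_{k+1}$ are exactly $\L_{N_k+j}$ for $1\le j\le X$. They are in bijection with $\L(j,N_k)$ for $j<X$ and with $\L_0(X,N_k)$ for $j=X$, since appending $x^{N_k}$ is injective. By the choice of $\L_0$, their total size is exactly $\Delta_k$. Here we must check that $N_k+X\le qN_k=N_{k+1}$, which the paper already notes. Summing gives
\[
\sum_{m=N_1}^{N_i}|\L_m| = 1+\sum_{k=1}^{i-1}\Delta_k = 1+f(N_i)-f(N_1),
\]
by telescoping.

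Finally we combine the two counts. The words $x^m$ with $m\ge N_1$ are counted in both, and there are at most $N_i-N_1+1$ such $m$. This yields
\[
f(N_i)-f(N_1)+1\;\le\;\gamma_M(N_i)\;\le\;(N_i+1)+1+f(N_i)-f(N_1).
\]
The upper bound is at most $f(N_i)+N_i+1$ provided $f(N_1)\ge 1$, which holds since $f$ is increasing with $f'\ge2$. The lower bound gives the claim with $C=f(N_1)$. The only delicate point is this bookkeeping of the $x$-power words shared between $\{1,x,x^2,\dots\}$ and the $\L_m$, so that nothing is double counted in the upper bound. Everything else reduces to the telescoping identity $\sum_k\Delta_k=f(N_i)-f(N_1)$.
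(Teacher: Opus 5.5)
Your proof is correct and is essentially the paper's argument. The $\L_m$ are pairwise disjoint because they have different lengths, each block $(N_k,N_{k+1}]$ contributes exactly $\Delta_k$, the sum telescopes to $f(N_i)-f(N_1)$, and the powers of $x$ add at most $N_i+1$ further words. The paper folds $\L_{N_1}=\{x^{N_1}\}$ into the $x$-powers, which spares it your extra check that $f(N_1)\geq 1$.

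Your aside that $x^{N_k+X}$ may be assumed to lie in $\L_0(X,N_k)x^{N_k}$ is not always possible, since $\L_0(X,N_k)$ can be empty. It is never used, however, because your final bounds only need an upper bound on the overlap with the $x$-powers.
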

\begin{proof}
Recall that we picked $\L_0(X,N_i)$ such that 
\[
|\L(1,N_i)|+\dots+|\L(X-1,N_i)|+|\L_0(X,N_i)|=\Delta_i.
\]
Since $|\L_{N_i+j}|=|\L(j,N_i)|$ for all $1\leq j\leq X-1$ and $|\L_{N_i+X}|=|\L_0(X,N_i)|$, it follows that $\left|\bigcup_{m=N_i+1}^{N_{i+1}} \L_m\right|=\Delta_i$.
Hence
\begin{eqnarray*}
|\mathcal{S} \cap \bigcup_{j=0}^{N_i} \{x,y\}^j| & = & |\{1,x,\dots,x^{N_i}\} \cup \bigcup_{m=N_1+1}^{N_{i}} \L_m |\\ & = & \left( \sum_{j=1}^{i-1} \Delta_{j} \right) + r \\ & = & f(N_{i})-f(N_1) + r,
\end{eqnarray*}
for some $0\leq r\leq N_i+1$
and the claim follows for $C=f(N_1)$.
\end{proof}

\begin{claim} \label{claim:growth varphi}
We have 
\[ \sum_{d=N_i+1}^{N_{i+1}} \dim_\Bbbk \varphi(\mathfrak{F}_d) \leq c N_i^{q_i-1} \] and 
\[ \sum_{d=1}^{N_{i+1}} \dim_\Bbbk \varphi(\mathfrak{F}_d) \leq c' f(N_{i+1}) \] for some constants $c,c'>0$.
\end{claim}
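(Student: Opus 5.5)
We bound $\dim_\Bbbk \varphi(\mathfrak{F}_d)$ for each $d$ in the strip $N_i< d\le N_{i+1}$ by counting nonzero words. Since $\varphi$ maps into the monomial algebra $A_{\mathcal{S}^{\her}}$, $\varphi(\mathfrak{F}_d)$ is spanned by images of left-normed commutators $[z_1,[z_2,\dots,[z_{d-1},z_d]\dots]]$ with $z_j\in\{x,y\}$. Each such commutator expands into a signed sum of words that are permutations of the letters $z_1\cdots z_d$. Hence the image is determined by the multiset of letters together with the sequence $(z_1,\dots,z_d)$. The plan is to show that every word of length $d$ in $\mathcal{S}^{\her}$ contains few $y$'s. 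Then we count sequences with at most that many $y$'s, using the observation from the proof sketch: we may assume the innermost pair is $[y,x]$ up to sign, since otherwise the bracket is $0$ or involves no $y$ at all. This last case contributes only $\varphi(x)$, a one-dimensional term.

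\textbf{Step 1 (structure of factors).} We first determine the factors of $\mathcal{S}$ of length $d\le N_{i+1}$. Every word of $\mathcal{S}$ has the form $ux^{N_k}$ with $u\in\L(j,N_k)$ and $j\le (q_k-1)N_k$, or is a power of $x$. Take a factor $w$ of length $d>N_i$ of such a word. If $k\le i$, then the $y$'s of $w$ lie in $u$, and any two of them are separated by at least $N_k-1$ copies of $x$. With $|u|\le(q_k-1)N_k$, $u$ has at most $q_k-1$ occurrences of $y$. If $k>i$, the $y$'s of $w$ are separated by at least $N_k-1\ge N_{i+1}-1\ge d-1$ copies of $x$, so $w$ has at most one $y$. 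We must also use the factors coming from the \emph{current} strip, i.e.\ $k=i$, where $u$ has length at most $X$ and at most $q_i-1$ $y$'s, separated by gaps of at least $N_i-1$.

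This step is the main obstacle. The factors from earlier strips $k<i$ can carry up to $q_k-1$ $y$'s with gaps of only $N_k-1$. The fix is to note that their total length is at most $q_kN_k\le N_{k+1}\le N_i<d$. So a factor of length $d$ must include a large chunk of the trailing $x^{N_k}$ or lie inside an earlier word. Consequently the number of such factors of length $d$ is at most $(\text{number of words of length}\le N_i\text{ in }\mathcal{S})\cdot d$, which is $O(f(N_i)\,N_{i+1})$. I would bound this crude contribution by showing these factors are of the form $v x^{d-|v|}$ with $v$ a suffix of a word of length at most $q_kN_k$.

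\textbf{Step 2 (counting).} Words from the current strip have at most $q_i-1$ $y$'s. Since $\varphi(\mathfrak F_d)$ lies in the span of nonzero words of length $d$, we can instead bound the dimension by the number of commutator sequences $(z_1,\dots,z_d)$ with innermost pair $(y,x)$ and at most $q_i-1$ $y$'s. This count is $\sum_{t\le q_i-2}\binom{d-2}{t}=O(d^{q_i-2})$, one degree lower than the associative count. Summing over $d\le N_{i+1}=qN_i$ gives $O(N_i^{q_i-1})$. The contributions from earlier strips, bounded via Step 1 by the span of words $vx^{d-|v|}$, give a sum over $d$ of order $\sum_{k<i}N_{k+1}^{q_k}$. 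By \eqref{eq:for intro} this is $O(\Delta_k)$ up to constants depending only on $q$, since $N_{k+1}=qN_k$.

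\textbf{Step 3.} With the first inequality in hand, $c N_i^{q_i-1}\le c(q_i-1)!\,\Delta_i$ by \eqref{eq:for intro}. Summing over strips $1,\dots,i$ and adding the initial segment $d\le N_1$ (a constant) gives
\[
\sum_{d=1}^{N_{i+1}}\dim\varphi(\mathfrak F_d)\le C+c\,q!\sum_{k\le i}\Delta_k\le c' f(N_{i+1}),
\]
which is the second inequality. Because the first inequality is only stated per strip, I would present the old-strip factors through this telescoping rather than inside the per-strip bound, if they cannot be absorbed into $cN_i^{q_i-1}$ directly.
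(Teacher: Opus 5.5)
Your Steps 2 and 3 match the paper's argument. A bracket containing $s$ letters $y$ expands into words each containing exactly $s$ letters $y$, and brackets whose innermost pair consists of equal letters vanish. So once every nonzero word of length $d\in(N_i,N_{i+1}]$ in $A_{\mathcal{S}^{\her}}$ is known to contain at most $q_i-1$ letters $y$, you get $\dim_\Bbbk\varphi(\mathfrak{F}_d)\le\sum_{t\le q_i-2}\binom{d-2}{t}=O(d^{q_i-2})$, hence $O(N_i^{q_i-1})$ per strip. The second inequality then follows by telescoping with $N_k^{q_k-1}\le (q_k-1)!\,\Delta_k$.

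The gap is in Step 1. There you identify an obstacle that does not exist and then treat it incorrectly, so (as you yourself concede) the first inequality is not established as stated. A nonzero word of length $d$ in $A_{\mathcal{S}^{\her}}$ is a factor of some word of $\mathcal{S}$ of length at least $d>N_i$. The words of $\mathcal{S}$ produced in a strip $k<i$ have length at most $q_kN_k\le N_{k+1}\le N_i<d$. You write this bound down yourself, and it means these words have no factors of length $d$ at all. Your description of such factors as containing a chunk of the trailing $x^{N_k}$ is therefore vacuous. Only powers of $x$ and words from strips $k\ge i$ contribute. Your own case analysis of these already gives at most one $y$ for $k>i$ and at most $q_i-1$ for $k=i$. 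This one-line observation is how the paper proceeds, and it yields the per-strip bound with no extra term.

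The workaround you propose is also wrong, not merely superfluous:
\begin{itemize}
\item Display \eqref{eq:for intro} gives $\Delta_k<N_k^{q_k}/q_k!$, an \emph{upper} bound on $\Delta_k$. So $N_{k+1}^{q_k}=q^{q_k}N_k^{q_k}$ is not $O(\Delta_k)$; only $N_k^{q_k-1}$ is. For $f(n)=n^2$ one has $q_k=3$ for large $k$, and $N_{k+1}^3$ exceeds $\Delta_k$, which is of order $N_k^2$, by a factor of order $N_k$.
\item Your crude count $O(f(N_i)N_{i+1})$ is not $O(f(N_{i+1}))$. For $f(n)=2n$ it is of order $N_i^2$, against $f(N_{i+1})=2qN_i$.
\end{itemize}
Adding such earlier-strip terms would break the per-strip bound: for instance $N_i^{q_{i-1}}\ge N_i\cdot N_i^{q_i-1}$ whenever $q_{i-1}\ge q_i$. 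After summing over strips, it would also break the bound by $c'f(N_{i+1})$. Replace Step 1 by the length observation above and drop those terms; the rest of your proof then coincides with the paper's.
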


\begin{proof}
Let $\mathcal{X} = \mathcal{S}^{\her}$ and consider the monomial algebra $A_\mathcal{X}$. Fix $i\geq 1$ and let $N_i+1\leq d\leq N_{i+1}$. Every non-zero length-$d$ monomial $u \in A_\mathcal{X}$ is a subword of a word $w\in \L(j,N_k)x^{N_k}$ for some $k\geq i$ and $1\leq j\leq N_{k+1}-N_k$. Furthermore, if $k\geq i+1$ then every two occurrences of $y$ in $w$ are separated by at least $N_{i+1}-1$ occurrences of $x$, so $u$ cannot contain more than one occurrence of $y$. For $k=i$, write $w=vx^{N_i}$ for $v\in \L(j,N_i)$, $j\leq X\leq (q_i-1)N_i$, so $w$ can contain at most $q_i-1$ occurrences of $y$. It follows that every non-zero monomial in $A_\mathcal{X}$ of length $\in [N_i+1,N_{i+1}]$ has at most $q_i - 1$ occurrences of $y$.

Let $\xi = [a_1,\dots,a_d]=[\cdots[[a_1,a_2],a_3],\dots]$ be an iterated Lie bracket with $a_1,\dots,a_d\in \{x,y\}$. Suppose that exactly $s$ of these are equal to $y$. Notice that, writing $\xi = \sum \alpha_j w_j$ as a linear combination of monomials in $\mathcal{F}$ of length $d$, each $w_j$ contains exactly $s$ occurrences of $y$. Recall that $\mathfrak{F}_d$ is spanned by all $d$-fold iterated Lie brackets and by the above argument, $\varphi(\xi)=0$ for every $d$-fold Lie bracket $\xi=[a_1,\dots,a_d]$ with at least $q_i$ occurrences of $y$. We may assume that $a_1\neq a_2$ and further, up to sign, $a_1=y$. Hence the number of distinct such brackets with at most $q_i-1$ occurrences of $y$ is at most
\[
\sum_{p=1}^{q_i - 1} {d-2 \choose p-1} \leq \sum_{p=1}^{q_i - 1} (d-2)^{p-1} \leq q_i d^{q_i-2}
\]
Therefore
\begin{eqnarray*}
    \sum_{d=N_i+1}^{N_{i+1}} \dim_\Bbbk \varphi(\mathfrak{F}_d) & \leq & \sum_{d=N_i+1}^{N_{i+1}} q_i d^{q_i - 2} \\ & \leq & (N_{i+1} - N_i) \cdot q N_{i+1}^{q_i - 2} \\ & \leq & q^2 N_i \cdot q \cdot q^{q-2} N_i^{q_i-2} \\ & \leq & q^{q+1} N_i^{q_i-1}
\end{eqnarray*}
and the first claim follows for $c=q^{q+1}$. Next, recall that for all $1\leq j\leq i$ we have $\frac{1}{(q_j-1)!}N_j^{q_j-1} \leq \Delta_j$, so $N_j^{q_j-1}\leq (q-1)! \Delta_j$ and 
\begin{eqnarray*}
\sum_{d=N_1+1}^{N_{i+1}} \dim_\Bbbk \varphi(\mathfrak{F}_d) & = & \sum_{j=1}^{i} \sum_{d=N_j+1}^{N_{j+1}} \dim_\Bbbk \varphi(\mathfrak{F}_d) \\ & \leq & c(q-1)!\sum_{j=1}^{i} \Delta_j \\ & = & c(q-1)!\left(f(N_{i+1}) - f(N_1)\right)
\end{eqnarray*}
and $\sum_{d=1}^{N_{i+1}} \dim_\Bbbk \varphi(\mathfrak{F}_d) \leq c' f(N_{i+1}) + c''$ for some constant $c'' = \sum_{d=1}^{N_1} \dim_\Bbbk \varphi(\mathfrak{F}_d)$, which can be absorbed into $c'$ by enlarging it if necessary. Hence the second inequality in the claim's assertion follows as well.
\end{proof}

We recall the following (standard) fact.

\begin{fact} \label{fact}
If $f,g\colon \mathbb{N}\rightarrow \mathbb{N}$ are non-decreasing and $f(n_i)\leq g(n_i)$ for an increasing sequence $n_1<n_2<\dots$ with bounded quotients $n_{i+1}/n_i \leq K$, then $f(n)\preceq g(n)$. Indeed, given $n>n_1$, let $i$ be such that $n_i < n \leq n_{i+1} \leq Kn_i$. Then $f(n) \leq f(n_{i+1}) \leq g(n_{i+1})\leq g(Kn_i)\leq g(Kn)$.
\end{fact}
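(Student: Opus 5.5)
The statement is the standard Fact, and the paper already sketches its justification in the statement itself. The plan is simply to make that argument complete and to say what the constants are.

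We take $C=1$ and $D=K$, and aim to show $f(n)\le g(Kn)$ for every $n$. For $n\le n_1$ we use monotonicity directly: $f(n)\le f(n_1)\le g(n_1)$. Since $n_1$ is a fixed finite value, we then need $g(n_1)\le g(Kn)$ for $n\ge 1$. This holds once $Kn\ge n_1$, which is automatic if $n_1\le K$. Otherwise we absorb the finitely many small $n$ into a larger multiplicative constant: take $C=\max\{1,f(n_1)\}$ and note $g(Kn)\ge 1$ whenever $g$ is not identically zero. (If $g$ vanishes on an initial segment, we instead enlarge $D$ to $\max\{K,n_1\}$, so that $Dn\ge n_1$ for every $n\ge1$.)

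For $n>n_1$, the sequence is increasing and hence unbounded, so there is a unique index $i$ with $n_i<n\le n_{i+1}$. The ratio bound gives $n_{i+1}\le Kn_i<Kn$. We then chain three inequalities:
\[
f(n)\le f(n_{i+1})\le g(n_{i+1})\le g(Kn).
\]
The first uses monotonicity of $f$, the second the hypothesis at $n_{i+1}$, and the third monotonicity of $g$ together with $n_{i+1}\le Kn$.

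Combining the two ranges gives $f(n)\le C\,g(Dn)$ for all $n$, which is $f\preceq g$. There is no real obstacle here. The only care needed is the bookkeeping for the initial segment $n\le n_1$, which we handle by enlarging $C$ or $D$ as described.
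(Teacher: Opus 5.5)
Your proof is correct and is the paper's argument: for $n>n_1$ you choose $i$ with $n_i<n\le n_{i+1}$ and chain $f(n)\le f(n_{i+1})\le g(n_{i+1})\le g(Kn)$, and you also handle the initial segment $n\le n_1$, which the paper leaves implicit. The choice $C=1$, $D=\max\{K,n_1\}$ already covers every $n\ge 1$, so you can drop the detour through $C=\max\{1,f(n_1)\}$; its justification ``$g(Kn)\ge 1$ whenever $g$ is not identically zero'' is false as stated.
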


\begin{proof}[{Proof of Theorem \ref{thm:Lie}}]
Consider the Lie algebra $L$ constructed above. By Proposition \ref{prop:growth L} and Proposition \ref{prop:growth H}, 
\[
\gamma_L(n) = \gamma_H(n)+\gamma_M(n-1) = \sum_{d=1}^{n} \dim_\Bbbk \varphi(\mathfrak{F}_d) + \gamma_M(n-1).
\]
By Claim \ref{claim:growth M},
\[
\gamma_M(N_i-1)\leq \gamma_M(N_i) = f(N_i) + O(N_i).
\]
Notice that $\gamma_M(n+1)\leq 3 \gamma_M(n)$, since $\bigcup_{i=0}^{n+1} S(i) \subseteq \bigcup_{i=0}^{n} S(i) \cup x \bigcup_{i=0}^{n} S(i) \cup y \bigcup_{i=0}^{n} S(i)$, so
\[
\gamma_M(N_i-1)\geq \frac{1}{3} \gamma_M(N_i) \geq \frac{1}{3} f(N_i) - C.
\]
Therefore,
\[
\gamma_L(N_i) \leq \sum_{d=1}^{N_i} \dim_\Bbbk \varphi(\mathfrak{F}_d) + f(N_i) + O(N_i) \overset{\text{Claim\ }\ref{claim:growth varphi}}{\leq}  A\cdot f(N_i)  
\]
for some constant $A>0$, and
\[
\gamma_L(N_i) \geq \gamma_M(N_i-1) \geq \frac{1}{3} f(N_i) - C \geq \frac{1}{4} f(N_i)
\] 
where the last inequality holds for all $i\gg 1$, as $f(n)\xrightarrow{n\rightarrow \infty} \infty$. 
Since $N_{i+1}/N_i=q$ is bounded, it follows by Fact \ref{fact} that $\gamma_L(n) \sim f(n)$.
\end{proof}

\subsection{Associative algebras with polynomial identities}
We give an associative counterpart of Theorem \ref{thm:Lie}. The growth functions of associative algebras have been completely characterized up to asymptotic equivalence by Bell and Zelmanov \cite{BZ}. The construction below is similar in the sense that it is built upon constructing a monomial algebra consisting of monomials in two letters with sparse occurrences of one of the letters; the approach below shows that, in the polynomial realm, one need not a priori assume the discrete derivative condition in the Bell--Zelmanov theorem. 

It is probably possible to reach the same conclusion by first showing, by means of discrete calculus, that increasing polynomially bounded functions that grow at least quadratically are bi-Lipschitz equivalent to ones satisfying the conditions of the Bell--Zelmanov theorem, and then utilizing their construction, keeping the number of occurrences of one of the letters bounded; we give the construction below as a more concrete realization, and since it is relatively quick given the ideas developed before. This further underscores the difference in the difficulty between describing the space of growth functions of Lie algebras compared to associative algebras, where one has access to a rich toolkit from combinatorics on words. Notice that the result below also strengthens \cite[Theorem~1.2]{GZ_module}.

\begin{thm} \label{thm:associative}
    Let $f\colon \mathbb{N}\rightarrow \mathbb{N}$ be an increasing, polynomially bounded function such that $f(n)\succeq n^2$. Then there exists a finitely generated associative $\Bbbk$-algebra $A$, satisfying a polynomial identity, such that $\gamma_A(n) \sim f(n)$.
\end{thm}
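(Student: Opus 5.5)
The plan is to realize $f$ by a monomial algebra $A=A_{\mathcal{X}}$ on $X=\{x,y\}$, where $\mathcal{X}$ is a hereditary language of \emph{sparse} words. I will not reuse the language $\mathcal{S}$ of the Lie construction directly. Its hereditary closure contains all prefixes $vx^t$ of the words $vx^{N_i}$, and this can inflate the count by a factor of order $N_i$, which would destroy bi-Lipschitz equivalence.

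\textbf{The language.} I will choose a non-decreasing gap function $g\colon\mathbb{N}\to\mathbb{N}$ and set
\[
\mathcal{X}_g=\{w\in X^*:\ \text{any two occurrences of $y$ in $w$ are separated by at least $g(|w|)-1$ occurrences of $x$}\}.
\]
This language is hereditary. If $u$ is a factor of $w$, then the gaps of $u$ are gaps of $w$, and $g(|u|)\le g(|w|)$, so $u\in\mathcal{X}_g$. The number of words of length $m$ in $\mathcal{X}_g$ is $L(m,g(m))$, so
\[
\gamma_A(n)=\sum_{m\le n}L(m,g(m)).
\]
Since $L(m,m+1)=m+1$, this count is at least of order $n^2$; this is exactly where the hypothesis $f(n)\succeq n^2$ enters. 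As in the proof of Theorem \ref{thm:Lie}, I fix $q$ with $f(n)<n^q/(q^qq!)$ eventually and work in strips $N_{i+1}=qN_i$. On each strip I take $g$ of the form $g(m)\in[N_i/q,\,N_{i+1}]$, chosen (possibly with one step inside the strip) so that $\sum_{m\in(N_i,N_{i+1}]}L(m,g(m))$ is comparable to $\Delta_i=f(N_{i+1})-f(N_i)$.

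\textbf{Choosing $g$ on each strip.} Lemma \ref{lem:L} shows that $L(cG,G)$ is of order $G^c$. Hence on the strip the count $\sum_m L(m,g(m))$ behaves like $N_i\cdot(N_i/g)^{c}$ for $c$ between $1$ and $q$. As $g$ decreases from $N_{i+1}$ to $N_i/q$, this sweeps from about $N_i^2$ up to about $N_i^{q}$, which covers the possible range of $\Delta_i$ by the choice of $q$ and by $\Delta_i\succeq N_i^2$. To tune the count within a constant factor, I will not use a single $g$ on the strip. Instead I will use a two-value step function: gap $G$ on $(N_i,m_0]$ and gap $G'$ on $(m_0,N_{i+1}]$. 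I then choose $G$, $G'$ and $m_0$ by an intermediate-value argument, in the spirit of \eqref{eq:for intro2}, using monotonicity of $L(m,G)$ in both $m$ and $G$. The monotonicity constraint on $g$ across strips requires that the gaps increase. I will ensure this by taking gaps proportional to $N_i$ with exponents chosen from \eqref{eq:for intro}, comparing consecutive strips via the bounded ratio $N_{i+1}/N_i=q$.

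\textbf{Concluding the growth estimate and the polynomial identity.} Once the strip counts match $\Delta_i$ up to constants, $\gamma_A(N_i)\asymp f(N_i)$, and Fact \ref{fact} gives $\gamma_A\sim f$. For the polynomial identity: for large $m$ we have $g(m)\ge m/q^2$, so every nonzero monomial contains at most $q^2+1$ occurrences of $y$. The ideal $I=(y)$ is spanned by monomials containing $y$, and $y$-counts add under multiplication, so $I^{q^2+2}=0$. Moreover $A/I\cong\Bbbk[x]$ is commutative, hence $[A,A]\subseteq I$. Therefore $A$ satisfies the identity $[z_1,z_2][z_3,z_4]\cdots[z_{2r-1},z_{2r}]=0$ with $r=q^2+2$.

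\textbf{Main obstacle.} I expect the main difficulty to be the simultaneous requirement that $g$ be globally non-decreasing (needed for heredity) and that the counts be tuned to $\Delta_i$ within bounded multiplicative error on every strip. If $\Delta_i$ drops sharply after a strip where it was large, a small gap forced in that earlier strip cannot be enlarged backwards. I will try to absorb this by working on coarser blocks (for instance $N_{i+1}=q^2N_i$) and invoking Fact \ref{fact} together with the monotonicity of $f$. If that fails, I would fall back on the Bell--Zelmanov characterization \cite{BZ}, combined with the bounded-$y$ observation above for the polynomial identity.
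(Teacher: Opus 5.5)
Your skeleton matches the paper's. You build a monomial algebra of sparse words in $x,y$ on strips $(N_i,N_{i+1}]$ with $N_{i+1}=qN_i$, tune it against $\Delta_i=f(N_{i+1})-f(N_i)$, and get the polynomial identity from a bounded number of $y$'s; your argument via $(y)$ being nilpotent and $A/(y)\cong\Bbbk[x]$ is the paper's. Your gap-function language $\mathcal{X}_g$ is indeed hereditary for non-decreasing $g$, and $\gamma_A(n)=\sum_{m\le n}L(m,g(m))$ is right.

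The gap is that the step which actually constitutes the construction is never carried out: you need a choice of $g$ that is non-decreasing and makes the counts track $f$. You flag this as the main obstacle and offer only contingencies (coarser blocks, or falling back on Bell--Zelmanov). The obstacle is also misdiagnosed. A sharp drop in $\Delta_i$ calls for \emph{enlarging} the gap going forward, which a non-decreasing $g$ allows, so the direction you describe is harmless. The only constrained direction is a rise after a strip that used a large gap. That conflict is self-inflicted, because you allow gaps as small as $N_i/q$ on strip $i$.

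The missing idea is to use gap exactly $N_i$, the starting length of the strip, until the quota is filled, and then revert to words with at most one $y$. In your notation, take $g=N_i$ on $(N_i,m_0]$, and $g=N_{i+1}$ on $(m_0,N_{i+1}]$ and at the start of the next strip. This is automatically non-decreasing. No smaller gap is ever needed: by Lemma \ref{lem:L}(2), the single level $L(qN_i,N_i)\ge N_i^q/q!$ already exceeds $\Delta_i<f(qN_i)\le N_i^q/q!$. Because the gap equals $N_i$, every factor of length at most $N_i$ of a strip-$i$ word has at most one $y$, so strips never interact.

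The paper realizes this with three ingredients: full levels $\L(j,N_i)$ for $N_i<j<X$, a partial top level $\L_X\subseteq\L(X,N_i)$ to hit $\Delta_i$ exactly, and the words $x^jyx^k$ added to secure heredity. In your pure gap-function version, the bound $L(m+1,N_i)\le 2L(m,N_i)$ gives a constant-factor match instead.

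Separately, your assertion $\Delta_i\succeq N_i^2$ is false. Once $f(n)$ is well above $n^2$, $f$ may increase by $1$ per step over long ranges, giving $\Delta_i=(q-1)N_i$. Meanwhile the words with at most one $y$ alone contribute order $N_i^2$ to every strip, so per-strip matching is impossible there. The fix is the paper's: assume $f(n)\ge\binom{n+2}{2}$, match only the words with at least two $y$'s against $\Delta_i$, compare cumulative counts at the points $N_i$, and conclude with Fact \ref{fact}.
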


\begin{proof}
Let $f\colon \mathbb{N}\rightarrow \mathbb{N}$ be an increasing, polynomially bounded function, such that $f(n)\succeq n^2$. We may assume that $f(n) \geq {n+2 \choose 2}$. 
Let $q \in \{2,3,4,\dots\}$ be large enough such that, for some $N\in \mathbb{N}$, \[
f(n) \leq \frac{n^q}{q^q q!}
\]
for all $n\geq N$. We work over the alphabet $\{x,y\}$, and retain the notation $L(N,n)$ from the previous subsection. 

As before, we define $N_1=N$ and $N_{i+1}=qN_i=q^iN_1$ and $\Delta_i := f(N_{i+1})-f(N_i)$.
For each $m$, we define a subset $\L_m\subseteq \{x,y\}^{m}$ such that the following \emph{hereditary property} holds: every length-$(m-1)$ subword of $\L_{m}$ belongs to 
\[ \L_{m-1} \cup \{x^j y x^{m-j-2}\ :\ 0\leq j\leq m - 2\} \cup \{x^{m-1}\}. \]

We take $\L_m = \emptyset$ for all $m\leq N_1$.
Suppose $\L_1,\dots,\L_{N_i}$ have been defined.

Notice that 
\[
\Delta_i < f(qN_i) \leq \frac{N_i^q}{q!} \overset{\ref{lem:L}}{\leq} L(qN_i,N_i)
\]
Therefore there exists some $X$ in $[N_i+1, qN_i]$ for which 
\begin{equation} \label{eqq:2}
\sum_{j=N_i+1}^{X-1} L(j,N_i) \leq \Delta_i < \sum_{j=N_i+1}^{X} L(j,N_i).
\end{equation}

For each $N_i+1\leq j < X$, we take $\L_{j}=\L(j,N_i)$. Thus $|\L_j|=L(j,N_i)$. We take $\L_X$ to be an arbitrary subset of $\L(X,N_i)$ such that
\[
\L(N_i+1,N_i)\cup \dots\cup \L(X-1,N_i) \cup \L_X
\]
is of size $\Delta_i$, which is possible by \eqref{eqq:2}. For $X<j\leq N_{i+1}=qN_i$, take $\L_j=\emptyset$.

We now verify the hereditary property for $\{\L_m\}_{m=1}^{\infty}$. If $m\in [N_i+1,N_{i+1}]$ and $m>X$ then $\L_m=\emptyset$, and there is nothing to prove; if $N_{i}+2\leq m \leq X$ and $w\in \L_m$ then every length $m-1$ subword $w'$ of $w$ still has every two occurrences of $y$ separated by at least $N_i-1$ occurrences of $x$, so $w'\in \L(m-1,N_i)=\L_{m-1}$. If $m=N_i+1$ then every $w\in \L_m \subseteq \L(m,N_i)$ either contains at most one occurrence of $y$ or takes the form $yx^{N_i - 1}y$, and hence every length $m-1$ subword $w'$ of $w$ has at most one occurrence of $y$ and thus belongs to $\{x^j y x^{m-j-2}\ :\ 0\leq j\leq m-2\}\cup\{x^{m-1}\}$, and the required property holds. 
Hence \[ \mathcal{X} := \{1,x,x^2,\dots\} \cup \{x^j y x^{k}\ :\ j,k \geq 0\} \cup \bigcup_{m=1}^{\infty} \L_{m} \]
is a hereditary formal language, giving rise to a monomial algebra $A_\mathcal{X}$ whose growth function is given by counting the words in $\mathcal{X}$ of length at most $n$.

We have
\begin{eqnarray} \label{eqq:5}
|\L_1|+\dots+|\L_{N_i}| & = & \sum_{j=1}^{i-1} \left( \sum_{t=N_j+1}^{N_{j+1}} |\L_t| \right) = \sum_{j=1}^{i-1} \Delta_j  = f(N_i)-f(N_1) 
\end{eqnarray}
and
\begin{equation} \label{eqq:5.5}
|\{1,x,\dots,x^{n}\} \cup \{x^j y x^k\ :\ j+k\leq n-1\}|=(n+1)+\sum_{j=0}^{n-1} (n-j) = {n+2 \choose 2}.
\end{equation}
Hence
\[
 f(N_i)-f(N_1) \overset{\eqref{eqq:5}}{\leq} \gamma_{A_\mathcal{X}}(N_i) \overset{\eqref{eqq:5},\eqref{eqq:5.5}}{\leq} {N_i+2 \choose 2} + f(N_i) - f(N_1) \leq 2f(N_i).
\]
Since $f$ is unbounded, for $i\gg 1$, it holds that $f(N_i)-f(N_1)\geq \frac{1}{2}f(N_i)$. Hence $\frac{1}{2}f(N_i) \leq \gamma_{A_\mathcal{X}}(N_i) \leq 2f(N_i)$ and by Fact \ref{fact}, $\gamma_{A_\mathcal{X}}(n) \sim f(n)$.

Finally, observe that every word in $\mathcal{X}$ contains at most $q$ occurrences of $y$ and hence $A_\mathcal{X}$ is a homomorphic image of $\Bbbk \langle x,y \rangle  / \langle y \rangle ^{q+1}$, and therefore satisfies the polynomial identity $[X_1,Y_1]\cdots[X_{q+1},Y_{q+1}]=0$.
\end{proof}

\subsection{Super-polynomial gaps}
The smooth characterization of the space of growth functions as done in Theorem \ref{thm:Lie} in the polynomial realm, does not extend beyond polynomial growth.

\begin{prop} \label{prop:Liegap}
    There exists an increasing, subexponential function $f\colon \mathbb{N}\rightarrow \mathbb{N}$ that is not bi-Lipschitz equivalent to the growth function of any finitely generated Lie algebra.
\end{prop}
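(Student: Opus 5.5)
The strategy is a diagonal construction. I will choose $f$ to alternate between very long stretches of slope one and sudden superpolynomial jumps. The goal is to find a rigidity phenomenon of Lie growth functions that survives bi-Lipschitz rescaling. Write $\gamma'(n)=\gamma_L(n)-\gamma_L(n-1)=\dim L_n$, where $L=\bigoplus L_n$ is $\gr_X$ of the Lie algebra. This graded algebra has the same growth function and satisfies $L_{n+1}=[L_1,L_n]$.

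\textbf{Step 1: the derivative inequality.} Since $V^{[\leq n+1]}=V^{[\leq n]}+\sum_j[x_j,V^{[\leq n]}]$ and $[x_j,V^{[\leq n-1]}]\subseteq V^{[\leq n]}$, we get $\gamma'(n+1)\leq d\,\gamma'(n)$. I note right away that this alone is useless here. After rescaling by $D$, a jump of $f$ at $M$ becomes a window of width about $DM$ for $\gamma_L$. That window permits growth $d^{DM}$, which dominates every subexponential target. So the obstruction has to come from the long \emph{flat} stretches, not from the jumps.

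\textbf{Step 2: the rigidity lemma (main obstacle).} I would aim to prove the following. If $\dim L_n\leq k$ for all $n$ in a window $[m,m+g(k)]$ of length $g(k)$ (with $g$ explicit and depending only on $k$ and $d$), then $\dim L_n\leq k$ for all $n\geq m$. Then $\gamma_L$ is at most linear with slope $k$ from that point on.

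\begin{itemize}
\item \emph{Linear-algebra approach.} Let $\mathcal{A}$ be the associative algebra generated by the operators $\mathrm{ad}\,x_1,\dots,\mathrm{ad}\,x_d$ acting on $\bigoplus_{n\geq m}L_n$. Over the window, the relevant operators are maps between spaces of dimension at most $k$. I would try a pumping argument: find $n<n'$ in the window where the pairs ``(subspace $L_n$, kernel of the action on the next few degrees)'' become compatible. The Jacobi identity, namely $[L_a,L_b]\subseteq L_{a+b}$ together with $L_{n+1}=[L_1,L_n]$, should then propagate the bound $\dim\leq k$ periodically beyond the window.
\item \emph{Where it may break.} Over an infinite field a pigeonhole on subspaces is not available. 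So the heart of the argument has to be a dimension and rank stabilization (a Fitting-type argument on the chain of ranks) rather than a finite-state one. This is where I expect most of the difficulty.
\item \emph{Fallback.} If this fails in full generality, I would weaken the conclusion. It would suffice to show that $\gamma_L(n)\leq P_k(n)$ for all $n\geq m$, for some fixed polynomial $P_k$ determined by $k$ and $d$, and with the window length allowed to depend on $k$.
\end{itemize}

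\textbf{Step 3: constructing $f$.} Fix any enumeration of the pairs of constants $(C_k,D_k)=(k,k)$ used in the bi-Lipschitz comparisons. Define $f$ inductively on blocks $[m_k,M_k]$ with $f'=1$, where $M_k$ is chosen so enormous that $[D_km_k,M_k/D_k]$ contains a window of length $g(C_k(f(m_k)+1))$ beyond the relevant point. After $M_k$, let $f$ jump up to roughly $\exp(\sqrt{M_k})$, spread over a short interval so that $f$ stays increasing and satisfies $f(n)\leq e^{O(\sqrt n)}$. This keeps $f$ subexponential. Then start the next flat block.

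\textbf{Step 4: the contradiction.} Suppose $\gamma_L\sim f$ with constants $C,D$. Pick $k$ with $C_k\geq C$ and $D_k\geq D$. On the flat block, $\gamma_L(n)\leq Cf(Dn)$, so $\gamma_L$ grows by at most $O(Dn)$ over the window. Hence $\dim L_n$ is bounded by roughly $C_k(f(m_k)+1)$ on a window long enough for Step 2 to apply, possibly after passing to an average and then a subwindow. Step 2 then gives $\gamma_L(n)=O(n)$, or $O(P_k(n))$ in the fallback form, for all larger $n$. This contradicts $f(M_{k'})\leq C\gamma_L(DM_{k'})$ for the later superpolynomial jumps with $k'>k$.
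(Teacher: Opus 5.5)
\textbf{Verdict: there is a genuine gap.} Your Step~2 lemma is false in both its strong and fallback forms, and the paper's own semidirect product $L=H\ltimes M$ (Proposition~\ref{prop:growth L}) gives a counterexample.

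Take $\mathcal{S}=\{x^j\}_{j\geq 0}\cup\bigcup_{j\geq 1}\L(j,N)x^N$. This set is suffix-closed and satisfies $S(n)\subseteq X\cdot S(n-1)$.
\begin{itemize}
\item Every factor of length at most $N$ of a word in $\mathcal{S}$ contains at most one $y$. So for $2\leq n\leq N$ the only left-normed bracket that can survive in $\varphi(\mathfrak{F}_n)$ is $\pm[y,x,\dots,x]$.
\item Also $|S(n-1)|=1$ in this range.
\end{itemize}
Hence $\dim L_n\leq 2$ for all $2\leq n\leq N$, with $N$ arbitrary. Yet $|S(N+j)|=L(j,N)\geq 2^{\lceil j/N\rceil}$ (allow $y$ only at positions $\equiv 1 \pmod N$), so $\gamma_L$ is eventually exponential. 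Thus no window length $g(k)$ forces $\dim L_n\leq k$ afterwards, or even $\gamma_L\leq P_k$. The Bergman/Morse--Hedlund-type rigidity of monomial associative algebras has no Lie analogue, and no rank-stabilization argument will produce one.

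Step~4 also has an unjustified passage. The assumption $\gamma_L\sim f$ only bounds the \emph{average} of $\dim L_n$ over the flat window. The degrees where $\dim L_n$ is large could be evenly spread, so a subwindow of length $g(k)$ with a pointwise bound need not exist.

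\textbf{The missing idea.} A flat stretch controls growth only at scales within a bounded multiple of its length, and the mechanism is the universal enveloping algebra, which your proposal never uses.
\begin{itemize}
\item Suppose $\gamma_L(i)\leq\lambda i$ for $i\leq N$. Smith's identity $\gamma_{U(L)}'=P_{\gamma_L'}$ and Petrogradsky's estimate give $\gamma_{U(L)}(N)\leq (N+1)\exp(N^{0.7})$ for $N\geq M_\lambda$. This needs only the cumulative bound, since $P_{\gamma_L'}\leq P_{\gamma_L}$, so the average-versus-pointwise issue disappears.
\item Submultiplicativity of $\gamma_{U(L)}$ then gives $\gamma_L(cN)\leq\gamma_{U(L)}(cN)\leq\gamma_{U(L)}(N)^{c}$.
\item The paper accordingly places a jump $f'(c^2N_c)=\lceil\exp(N_c^{3/4})\rceil$ within a factor $c^2$ of a linear stretch, and gets a contradiction for large $c$.
\end{itemize}

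This also shows your jumps of size $\exp(\sqrt{M_k})$ are too small even for the correct argument. Any infinite-dimensional $L$ of this kind has $\dim L_i\geq 1$ for all $i$. So $\gamma_{U(L)}(n)$ is at least the partition function, which is $\exp(\Theta(\sqrt{n}))$. The available bound $\gamma_L(DM_k)\leq\gamma_{U(L)}(M_k/D)^{D^2}$ is therefore at least $\exp(cD^{3/2}\sqrt{M_k})$ with $c>1$, and can never undercut $\exp(\sqrt{M_k})$. This is the stretched-exponential border the paper mentions at the end of that section. You need jumps beating $\exp(n^{2/3+\varepsilon})$, such as $\exp(N^{3/4})$.
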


\begin{proof}
We define a function as follows. We let $c=1,2,3,\dots$ and define a sequence $N_c$, along with the values of $f$, inductively. Let $N_1=1$ and $f(1)=1$. We will have $f'(i)=1$ for all $i\notin \{ c^2 N_c\ :\ c=2,3,\dots\}$ and $f'(c^2 N_c) = \lceil \exp(N_c^{3/4}) \rceil$ for all $c=2,3,\dots$. Notice that $f$ is increasing by design, and has subexponential growth; in fact, $f(n) = O(\exp(n^{3/4}))$. 
To make the construction well-defined, we need to specify $N_{c}$ given that $N_{c-1}$ has been fixed. 

Toward this end, we recall the connection between the growth functions of Lie algebras and the growth functions of their universal enveloping algebras, discovered by M.~Smith \cite{Smith76} and significantly further developed by Petrogradsky \cite{Petrogradsky}. Given a function $\varphi\colon \mathbb{N}\rightarrow \mathbb{N}$, let $P_\varphi$ be given by
\[
\prod_{i=1}^{\infty} \frac{1}{(1-t^i)^{\varphi(i)}} = \sum_{n=0}^{\infty} P_\varphi(n) t^n.
\]
Then $P_{\gamma_L'}=\gamma_{U(L)}'$ (see \cite{Smith76}). Furthermore, Petrogradsky proved that if $\limsup_{n\rightarrow \infty} \log_n \varphi(n) \leq \alpha-1$ then $\limsup_{n\rightarrow \infty} \log_n \ln P_\varphi(n) \leq \frac{\alpha}{\alpha+1}$.
This means, in particular, that for every $\lambda>0$ there exists some $M=M_\lambda \in \mathbb{N}$ such that, if $\varphi(n)\leq \lambda n$ for every $n$ then $P_\varphi (n)\leq \exp(n^{0.7})$ for every $n\geq M$. Since $P_\varphi(N)$ depends only on the values of $\varphi$ up to $N$, we may assume that $\varphi(n)\leq \lambda n$ for all $n\leq N$ and conclude $P_\varphi (N)\leq \exp(N^{0.7})$ --- as long as $N\geq M$.

We can now specify $N_{c}$ given that $N_{c-1}$ has been determined. Assume that $N_1,\dots,N_{c-1}$ and the values of $f$ on $[1,(c-1)^2 N_{c-1}]$ are all fixed. Recall that for $i$ not of the form $j^2 N_j$, $f'(i)=1$. It follows that, letting $N_{c}\rightarrow \infty$, the function $f$ grows linearly, and so does $g(x):=cf(cx)$, for all $x < c N_c$. Hence, by the above analysis, if we take $N_c \gg N_{c-1}$, we can ensure that
\begin{equation} \label{eq:Liegap1}
P_g(N_c)\leq \exp(N_c^{0.7})    
\end{equation}
Further assume that $c < \log N_c$ (for $c>1$).

\medskip

Assume, toward contradiction, that there exists a finitely generated Lie algebra $L$ (which we may assume to be $\mathbb{N}$-graded, generated in degree $1$) such that $\gamma_L\sim f$, and let $K>1$ be such that $\gamma_L(n)\leq Kf(Kn),f(n)\leq K\gamma_L(Kn)$ for all $n$.
We have, for every $c \geq K$,
\begin{eqnarray} \label{eq:Liegap1.5}
    \frac{1}{c} f(c^2 n) \leq \frac{1}{K} f(c^2 n) & \leq & \gamma_L(K c^2 n) \\ & \leq & \gamma_L(c^3 n)\leq \gamma_{U(L)}(c^3 n)\leq \gamma_{U(L)}(n)^{c^3} \nonumber
\end{eqnarray}
where the last inequality utilizes the submultiplicativity of $\gamma_{U(L)}$, $U(L)$ being an associative algebra.

However, $\gamma_L(i)\leq Kf(Ki)\leq c f(ci)=g(i)$ for all $i\in [1,N_c]$. Hence \begin{equation} \label{eq:Liegap2}
P_{\gamma_L}(N_c) \leq P_g(N_c) \overset{\eqref{eq:Liegap1}}{\leq} \exp(N_c^{0.7}).
\end{equation}
Since $\gamma_{U(L)}'(N_c) = P_{\gamma_{L}'}(N_c) \leq P_{\gamma_L}(N_c)$, and since $\gamma_{U(L)}'$ is non-decreasing (e.g.~since it is a graded domain, and multiplication by any non-zero element of degree one induces an injective linear map $U(L)_d\rightarrow U(L)_{d+1}$), we have
\begin{eqnarray} \label{eq:Liegap3}
    \gamma_{U(L)}(N_c) & \leq & (N_c+1) \gamma_{U(L)}'(N_c) \\ & \leq & (N_c+1) P_{\gamma_L}(N_c) \nonumber \\ & \overset{\eqref{eq:Liegap2}}{\leq} & (N_c+1) \exp(N_c^{0.7}). \nonumber
\end{eqnarray}

Taking $n=N_c$ in \eqref{eq:Liegap1.5} and combining with the definition of $f'(c^2 N_c)$ and with \eqref{eq:Liegap3},
\begin{eqnarray*}
\frac{1}{c}\exp(N_c^{0.75}) \leq \frac{1}{c} f(c^2 N_c) & \overset{\eqref{eq:Liegap1.5}}{\leq} & \gamma_{U(L)}(N_c)^{c^3} \\ & \overset{\eqref{eq:Liegap3}}{\leq} & \left((N_c+1) \exp(N_c^{0.7})\right)^{c^3} \leq (N_c+1)^{\log^3 N_c} \exp(N_c^{0.7} \log^3 N_c)
\end{eqnarray*}
which leads to a contradiction when $c\rightarrow \infty$. This proves that $f$ is not bi-Lipschitz equivalent to the growth function of any Lie algebra.
\end{proof}

One might wonder if there exist additional natural features of growth functions of Lie algebras that can be employed to characterize them up to bi-Lipschitz equivalence. For instance, we always have $\gamma_L(n+1)\leq (c+1)\gamma_L(n)$ where $c$ is the size of the generating set. However, every increasing exponentially bounded function is bi-Lipschitz equivalent to a function with bounded successive quotients (e.g.~by \cite{GZ_module}), so this does not provide a genuine new (asymptotic) obstruction.

The obstruction utilized in Proposition \ref{prop:Liegap} uses the submultiplicativity of the universal enveloping algebra. More generally, viewing $L$ as a $U(L)$-module (via the adjoint representation), we obtain that $\gamma_L(n+m) \leq \gamma_{U(L)}(n) \cdot \gamma_L(m)$; hence, $\gamma_L$ is restricted by its past values, through the growth of its universal enveloping algebra. However, due to the generalized partition function interpretation of $\gamma_{U(L)}$, this feature fails to produce (increasing) functions not bi-Lipschitz equivalent to growth functions of Lie algebras beyond the stretched exponential border $\exp(\sqrt{n})$.

\section{Jordan algebras}

In this part, we prove Theorem \ref{thm:Jordan}. We start with a brief description of the proof idea.

\medskip

\noindent \emph{Proof Sketch.} The proof is quite implicit, in the sense that we do not associate a canonical/functorial associative algebra with any Jordan algebra, having the same growth rate. First, we use the action of the multiplicative envelope $M(J)$ on $J$. Using a multilinearization of the Jordan identities, we obtain (\`a la Jacobson) a submultiplicativity-like inequality $\gamma_J(n+m) \preceq \gamma_{M(J)}(n)^2 \gamma_J(m)$. Next, using a result of Mart\'inez--Zelmanov, $\gamma_{M(J)}(n)\preceq \gamma_J(n)^2$. The resulting submultiplicativity-type property of $\gamma_J(n)$ turns out to be sufficient to `regularize' it to a submultiplicative function, which is then bi-Lipschitz equivalent to a function satisfying the conditions of the Bell--Zelmanov theorem.

\medskip

\begin{proof}[{Proof of Theorem \ref{thm:Jordan}}]
\ 
\smallskip

\noindent \emph{Step I: $J-M(J)$ submultiplicativity.} This step uses standard techniques from the theory of Jordan algebras, and is likely well-known to experts. We give a complete argument for the reader's convenience.

Suppose $J$ is generated, as a Jordan algebra, by $X=\{x_1,\dots,x_d\}$. Given $a\in J$, we denote by $L_a\colon J\rightarrow J$ the left multiplication operator $L_a(x)=ax$.

Consider the free Jordan algebra $FJ(X)=\bigoplus_{n=0}^{\infty} FJ_n$ where $FJ_n$ is the $\Bbbk$-subspace spanned by all Jordan words in $X$ of length $n$. Let $FJ_{\leq n}=\bigoplus_{i=0}^{n} FJ_i$.
A known consequence of a polarization of the Jordan identity yields \cite{Jacobson,Martinez}
\begin{equation} \label{eq:Jordan1}
L_{(ab)c} = L_{ab} L_c + L_{bc} L_a + L_{ac} L_b - L_b L_c L_a - L_a L_c L_b.
\end{equation}
Let $S_1 = \{L_{x_i}\ :\ 1\leq i\leq d\}$ and $S_2 = \{L_{x_i x_j}\ :\ 1\leq i,j \leq d\}$ and set the \emph{weight} $\wt$ of operators from $S_1$ to be $1$, and the weight of operators from $S_2$ to be $2$. We claim that if $w$ is a Jordan monomial of degree $n$ in $X$ then
\begin{equation} \label{eq:Jordan2}
L_w \in \Span_\Bbbk \{ T_1\cdots T_s\ :\ T_1,\dots,T_s\in S_1\cup S_2,\ \sum_{j=1}^{s} \wt(T_j) = n \}. 
\end{equation}
For $n=1$ and $n=2$ this is obvious, as then $w=x_i$ or $w=x_i x_j$ (respectively), so $L_w\in S_1$ or $L_w\in S_2$ (resp.) and the claim holds. For $n\geq 3$, write $w=(ab)c$ where $a,b,c$ are non-trivial Jordan monomials of total degree $n$. By \eqref{eq:Jordan1}, $L_w=L_{(ab)c}$ is a linear combination of operators of the form $L_{pq}L_r$ and $L_p L_q L_r$ with $\{p,q,r\}=\{a,b,c\}$. Since $p,q,r,pq$ are all Jordan monomials of degree smaller than $n$, the induction hypothesis implies that $L_{pq}$ is a linear combination of products of operators from $S_1\cup S_2$ of total weight $\deg(p)+\deg(q)$, and $L_r$ is a combination of such products with total weight $\deg(r)$, and hence, $L_{pq}L_r$ is a combination of products of operators from $S_1\cup S_2$ of total weight $\deg(p)+\deg(q)+\deg(r)=\deg(a)+\deg(b)+\deg(c)=n$. The argument for $L_p L_q L_r$ is similar, writing each one of $L_p,L_q,L_r$ as a combination of products of total weight $\deg(p),\deg(q),\deg(r)$, respectively.

Now given a Jordan monomial $w\in FJ_n$, we can write $w=L_{u_1}L_{u_2}\dots L_{u_t}(x_i)$ where $u_1,\dots,u_t$ are Jordan monomials (not necessarily generators) of total degree $\deg(u_1)+\dots+\deg(u_t)=n-1$, and $x_i\in X$. (In terms of the binary tree representation of any monomial in the free non-associative algebra, $x_i$ corresponds to -- any -- leaf.) Now using \eqref{eq:Jordan2}, we can write each $L_{u_i}$ as a linear combination of products of operators from $S_1\cup S_2$ of total weight $\deg(u_i)$. Altogether,
\begin{equation} \label{eq:Jordan3}
w \in \Span_\Bbbk \{T_1\cdots T_s(x_i)\ :\ T_1,\dots,T_s\in S_1\cup S_2,\ \sum_{j=1}^{s} \wt(T_j)=n-1 \}.
\end{equation}
Let $V=\Span_\Bbbk\left(S_1 \cup S_2 \cup \{\idd\}\right) \leq M(FJ)$. Notice that $V\subseteq V^2\subseteq \dots$ and, by \eqref{eq:Jordan2}, $V$ generates $M(FJ)$. Now let $n,m\in \mathbb{N}$ be given and let $w\in FJ_{n+m}$ be a Jordan monomial. As in \eqref{eq:Jordan3}, we can write $w$ as a linear combination of expressions of the form $T_1\dots T_s(x_i)$ with $T_1,\dots,T_s\in S_1\cup S_2$ and $\sum_{j=1}^{s} \wt(T_j)=n+m-1$. Fix any of these expressions. Decompose $T_1\dots T_s=(T_1\dots T_{j-1})(T_j\dots T_s)$ for $1\leq j\leq s+1$ the smallest possible for which $\wt(T_j)+\dots+\wt(T_s)\leq m-1$ (if $j=s+1$, we interpret $`T_{s+1}\dots T_s'$ as the identity, of total weight zero). Thus $T_j\dots T_s(x_i)\in FJ_{\leq m}$. Now since $\wt(T_j)+\dots+\wt(T_s)\leq m-1$ and the weight of each $T_1,\dots,T_s$ is either $1$ or $2$, there are only two options: either $\wt(T_j)+\dots+\wt(T_s)=m-1$, in which case $\wt(T_1)+\dots+\wt(T_{j-1})=n$ and so $j-1\leq n$ and therefore $T_1 \cdots T_{j-1}\in V^n$; or $\wt(T_j)+\dots+\wt(T_s)=m-2$ and $\wt(T_{j-1})=2$, in which case $\wt(T_1)+\dots+\wt(T_{j-1})=n+1$, but again $j-1\leq n$ and $T_1\cdots T_{j-1}\in V^n$. In all cases, $T_1\cdots T_s(x_i)\in V^n \cdot FJ_{\leq m}$. Here $\cdot$ stands for the operation of a linear transformation from $M(FJ)$ on $FJ$. Since $w$ was a linear combination of such expressions, we conclude that $FJ_{\leq n+m} \subseteq V^n \cdot FJ_{\leq m}$. Finally, pushing this along a surjective homomorphism $FJ\twoheadrightarrow J$, we obtain the same inclusion for $J$. We let $J_{\leq n}$ denote the subspace of $J$ spanned by all products of at most $n$ generators, $V$ the subspace of $M(J)$ spanned by $L_{x_i},L_{x_i x_j},\idd$ and conclude
\begin{equation} \label{eq:Jordan4}
J_{\leq n+m} \subseteq V^n \cdot J_{\leq m}.
\end{equation}
(We formally proved the inclusion for Jordan monomials of length exactly $n+m$; for words of length $k+m,\ k<n$, the assertion follows similarly as $V^k\subseteq V^n$, and for Jordan monomials of length $\leq m$, the assertion is evident.)
Consequently, letting $\gamma_J(n) = \gamma_{J,X}(n) = \dim_\Bbbk J_{\leq n}$ and $\gamma_{M(J)}(n) = \gamma_{M(J),V}(n) = \dim_\Bbbk V^n$, we have
\[
\gamma_J(n+m) \leq \gamma_{M(J)}(n) \gamma_J(m).
\]
By \cite[Lemma~2]{MZ22}, we have $\gamma_{M(J)}(n)\preceq \gamma_J(n)^2$. In fact, the proof shows that $\gamma_{M(J)}(n) \leq C\gamma_J(2n)^2$ for some $C>1$ (which we may assume to be an integer), and hence
\[
\gamma_J(n+m) \leq C \gamma_{J}(2n)^2 \gamma_J(m).
\]
Replacing $\gamma_J(n)$ by $f(n) := C \gamma_J(n)$ -- which is bi-Lipschitz equivalent to it -- we get
\[
f(n+m)\leq f(2n)^2 f(m).
\]
We further adjust $f$ to `absorb' the inner doubling in the above inequality.
We put 
\begin{equation} \label{eq:Jordan4.5}
    \hat{f}(n):=\max\{f(n),\left\lceil \sqrt{\sup_{t\geq 1} \frac{f(n+t)}{f(t)}} \right\rceil\}.
\end{equation}
This definition is based on a discrete analog of a concept discussed in \cite{GrochenigWeights} (discussion on `moderate weights' before Lemma 5.1) in the continuous context, namely, $\sup_y \frac{f(x+y)}{f(y)}$. 

Note that $\hat{f}$ is well-defined, as $f(n+t)\leq f(2n)^2 f(t)$, and therefore $\hat{f}(n)\leq f(2n)$. Since $f(n)\leq \hat{f}(n)$ by design, $\hat{f}\sim f$. Notice that $\hat{f}$ is non-decreasing: if $\hat{f}(n)=f(n)$ then $\hat{f}(n)\leq \hat{f}(n+1)$ since $f$ is non-decreasing, and otherwise, $\hat{f}(n) =\lceil \sqrt{\sup_{t\geq 1} \frac{f(n+t)}{f(t)}} \rceil \leq \lceil \sqrt{\sup_{t\geq 1} \frac{f(n+1+t)}{f(t)}} \rceil \leq \hat{f}(n+1)$ and again $\hat{f}(n)\leq \hat{f}(n+1)$.

Fix $n,m$. If $\hat{f}(n+m)=f(n+m)$ then 
\[
\hat{f}(n+m)=f(n+m)\overset{(*)}{\leq} \hat{f}(n)^2 f(m)\leq \hat{f}(n)^2 \hat{f}(m),
\]
where $(*)$ follows from the maximum definition of $\hat{f}$. Otherwise, fix any $t\geq 1$ and observe that 
\[
\frac{f(n+m+t)}{f(t)}
= \frac{f(n+m+t)}{f(m+t)}\cdot \frac{f(m+t)}{f(t)} \leq \hat{f}(n)^2 \hat{f}(m)^2.
\]
Since in this case $\hat{f}(n+m) = \lceil \sqrt{\sup_{t\geq 1} \frac{f(n+m+t)}{f(t)}} \rceil$, we have $\hat{f}(n+m)^2 \leq \hat{f}(m)^2 \hat{f}(n)^2$, so $\hat{f}(n+m)\leq \hat{f}(n) \hat{f}(m) \leq \hat{f}(n)^2 \hat{f}(m)$. Replacing $f$ by $\hat{f}$,
\begin{equation} \label{eq:Jordan5}
f(n+m) \leq f(n)^2 f(m).
\end{equation}

\medskip

\noindent \emph{Step II: submultiplicativization.} 
We now aim to further replace $f$ defined above by a submultiplicative function. Following \cite{GoswamiPales}, let $g\colon \mathbb{N}\rightarrow \mathbb{N}$ be
\[
g(N) := \min \{f(n_1)\cdots f(n_k)\ :\ n_1+\dots+n_k\geq N\}.
\]
(The construction in \cite{GoswamiPales} is given in terms of sub-additive functions and is equivalent up to applying logarithms.)
Observe that $g$ is well-defined and non-decreasing by definition. Moreover, $g$ is submultiplicative by design: let $N,M$ be given, and fix `minimizing coverings,' that is, $n_1,\dots,n_k$ and $m_1,\dots,m_l$ such that $\sum_{i=1}^{k} n_i\geq N,\sum_{i=1}^{l} m_i \geq M$, and 
\[
g(N)=\prod_{i=1}^{k} f(n_i),\ g(M)=\prod_{i=1}^{l} f(m_i).
\] 
Then 
\[ 
n_1+\dots+n_k+m_1+\dots+m_l\geq N+M,
\]
so 
\[
g(N+M)\leq f(n_1)\dots f(n_k) f(m_1)\dots f(m_l) = g(N) g(M).
\]

We now show that $f\sim g$. Taking $n_1=N$, we evidently see that $g(N)\leq f(N)$ for all $N\in \mathbb{N}$. Conversely, we claim that $f(N) \leq g(3N)$. Fix a minimizing covering $n_1 \geq n_2\geq\dots \geq n_k$ such that $\sum_{i=1}^{k} n_i\geq 3N$ and $g(3N) = f(n_1)\cdots f(n_k)$. First, if $n_1\geq N$ then evidently $g(3N)\geq f(n_1)\geq f(N)$, so let us assume otherwise. Let $t$ be the maximum even index not exceeding $k$. Notice that $n_2+n_4+\dots+n_t\geq N$; otherwise, $n_3+n_5+\dots+n_{t+1}\leq n_2+n_4+\dots+n_t <N$ (if $t=k$, take $n_{t+1}=0$), so $\sum_{i=1}^{k} n_i < n_1 +2N <3N$, a contradiction. Therefore, 
\begin{equation} \label{eq:Jordan6}
n_2+n_4+\dots+n_t\geq N
\end{equation} and thus also 
\[ n_1+n_3+\dots+n_{t-1}\geq N \] 
and by the definition of $g$, it follows that 
\begin{align} \label{eq:Jordan7}
g(3N) & = f(n_1)\cdots f(n_k) \\ & \geq f(n_1)f(n_3)\cdots f(n_{t-1}) \cdot f(n_2)f(n_4)\cdots f(n_t) \nonumber \\ & \geq f(n_2)^2f(n_4)^2\cdots f(n_t)^2 \nonumber
\end{align}
but by \eqref{eq:Jordan5}, 
\begin{eqnarray*}
f(n_2+n_4+\cdots+n_t) & \leq & f(n_2)^2f(n_4+\cdots+n_t) \\ & \vdots & \\ & \leq & f(n_2)^2 f(n_4)^2 \cdots f(n_{t-2})^2 f(n_t)
\end{eqnarray*}
which, together with \eqref{eq:Jordan6} and \eqref{eq:Jordan7}, yields
\[
f(N) \leq f(n_2+n_4+\cdots+n_t) \leq g(3N),
\]
and it follows that $f\sim g$.

Further, by \cite{MZ96}, either $\gamma_J(n)\succeq n^2$ or $\gamma_J(n)=O(n)$, in which case, either $\gamma_J(n)\sim n$ or $\gamma_J(n)=O(1)$. (The stated result in \cite{MZ96} uses the language of the GK-dimension, but the paper actually proves the result as stated above.) 

We henceforth assume that $\gamma_J(n)\succeq n^2$, and thus $g(n)\succeq n^2$.

\medskip

\noindent \emph{Step III: regularization toward the Bell--Zelmanov condition.} Recall that Bell and Zelmanov \cite{BZ} identified a condition satisfied by every growth function of a finitely generated associative algebra (except for bounded and linear ones), and which conversely, every function satisfying it, is asymptotically equivalent to the growth function of some finitely generated associative algebra. Specifically, they showed that if $f\colon \mathbb{N}\rightarrow \mathbb{N}$ satisfies $f'(n)\geq n+1$ and, for all $m\in [n,2n]$, $f'(m)\leq f'(n)^2$, then $f(n)\sim \gamma_A(n)$ for some finitely generated associative algebra $A$.

Let $g\colon \mathbb{N}\rightarrow \mathbb{N}$ be an increasing, submultiplicative function, $g(n)\succeq n^2$. Therefore $g(n)\sim g(n)+2n^2$ (which is submultiplicative too) and thus, without loss of generality, we may assume that $g'(n)\geq 4n-2\geq 2n$. Consider the `dyadic derivative' 
\[
d_k:=\frac{g(2^{k+1}) - g(2^k)}{2^k}
\]
for every $k\geq 0$. 
Notice that
\begin{eqnarray} \label{eq:Jordan8}
d_k & = & \frac{g'(2^{k+1})+\cdots+g'(2^k+1)}{2^k} \\ & \geq & \frac{2\cdot 2^{k+1}+\cdots+2\cdot (2^k+1)}{2^k} \nonumber \\ & \geq & \frac{2\cdot 2^k\cdot 2^k}{2^k}=2^{k+1}. \nonumber
\end{eqnarray}
Let $\beta_k := \sup \{\sqrt[2^i]{d_{k+i}}\ :\ i=0,1,2,\dots\}$ and notice that $\beta_k < \infty$, since $g(2^{k+i+1})^{1/2^i}$ is uniformly bounded over all $i\geq 0$, as $g$ is exponentially bounded. 
We let 
$h(n)=\sum_{i=1}^{n} \lceil \beta_{\lfloor \log_2 i\rfloor } \rceil$ and observe that $h'(n)=\lceil \beta_{\lfloor \log_2 n\rfloor } \rceil$. In other words, $h'(n)$ is the ceiling of $\beta_k$ where $k$ is such that $2^k \leq n < 2^{k+1}$.
This construction can be interpreted as a discrete version of a measure-theoretic concept of the least non-increasing majorant \cite[Pages~207--208]{SinnamonMonotonicity}, given by $f^{\downarrow}(x)\ =$ the essential supremum of $f(y),\ y\geq x$. The discrete version would be $f^{\downarrow}(k) = \sup_{j\geq k} f(j)$. 
The sequence $\{\beta_k\}_{k=0}^{\infty}$ is obtained by considering the sequence $\{\sqrt[2^k]{d_k}\}_{k=0}^{\infty}$, shifting to its least non-increasing majorant as discussed above, and then raising back to the power of $2^k$; in other words, this is a `conjugation' of the discrete majorant by incrementing powers of $2$. The role of these powers of $2$ is to ensure that $\beta_{k+1}\leq \beta_k^2$ as we will see below, which ensures the submultiplicativity-type condition in the Bell--Zelmanov theorem.

First, observe that $h$ satisfies the Bell--Zelmanov condition mentioned above. Indeed, given $n$ and $k$ such that $2^k\leq n < 2^{k+1}$,
\[
h'(n)=\lceil \beta_k \rceil \geq d_k \overset{\eqref{eq:Jordan8}}{\geq} 2^{k+1} \geq n+1.
\]
If $m\in [n,2n]$ then $m < 2^{k+2}$, so $h'(m)=\lceil \beta_k \rceil$ or $h'(m)=\lceil \beta_{k+1} \rceil$. Since 
\[
\sqrt[2^i]{d_{(k+1)+i}} = \left(\sqrt[2^{i+1}]{d_{k+i+1}}\right)^2,
\]
we have that $\beta_{k+1}\leq \beta_k^2$ and hence $\lceil \beta_{k+1} \rceil \leq \lceil \beta_k^2 \rceil \leq \lceil \beta_k \rceil ^ 2$. In all cases, $h'(m)\leq h'(n)^2$, and the Bell--Zelmanov condition is satisfied.

Next, we claim that $h(n)\sim g(n)$.
We have
\begin{eqnarray} \label{eq:Jordan9}
h(2^{k+1}-1) & = & h'(1)+h'(2)+\dots+h'(2^{k+1}-1) \nonumber \\ & \geq & 
\beta_{\lfloor \log_2 1 \rfloor}+\beta_{\lfloor \log_2 2 \rfloor}+\dots+\beta_{\lfloor \log_2 (2^{k+1}-1) \rfloor} \nonumber \\
& = & \beta_0 + 2  \beta_1  + \dots + 2^k  \beta_k  \\ & \geq & d_0+2d_1+\dots+2^k d_k \nonumber \\ & = & (g(2)-g(1))+(g(4)-g(2))+\dots+\left(g(2^{k+1})-g(2^k)\right)=g(2^{k+1})-g(1) \nonumber
\end{eqnarray}
and it follows from Fact \ref{fact} that $g(n)\preceq h(n)$.

Conversely, for every $0\leq m\leq k$, $\beta_m=\sup_{i\geq 0} \{d_{m+i}^{2^{-i}}\}$; so, for some $i=i(m)$, we have $\beta_m \leq d_{m+i}^{2^{-i}}+1$. 

If $i(m)=0$ then $\beta_m \leq d_m+1$ so $2^m \beta_m\leq g(2^{m+1})-g(2^m)+2^m$. Hence 
\begin{equation} \label{eq:Jordan10}
    \sum_{\substack{0\leq m\leq k \\ i(m)=0}} 2^m \beta_m \leq \sum_{m=0}^{k} g(2^{m+1})-g(2^m)+2^m \leq g(2^{k+1}) + 2^{k+1}.
\end{equation}

If $i(m)=1$, $\beta_m \leq \sqrt{d_{m+1}} + 1\leq \frac{\sqrt{g(2^{m+2})}}{2^{m/2}}+1$, so 
\begin{eqnarray} \label{eq:Jordan11}
    \sum_{\substack{0\leq m\leq k \\ i(m)=1}} 2^m \beta_m & \leq & \sum_{m=0}^{k} 2^{m/2} \sqrt{g(2^{m+2})} + 2^m \\ & \leq & (k+1) 2^{k/2} \sqrt{g(2^{k+2})} + 2^{k+1} \nonumber \\ & \leq & 2^{k+1} (\sqrt{g(2^{k+2})}+1) \leq  g(2^{k+2}) \nonumber
\end{eqnarray}
where the last inequality holds since we assume that $g(x)\geq x^2$.

If $i=i(m)\geq 2$, $g(2^{m+i+1})\leq g(2^{m+3})^{2^{i-2}}$ by submultiplicativity, so \[g(2^{m+i+1})^{2^{-i}}\leq g(2^{m+3})^{1/4}\] and \[ \beta_m \leq d_{m+i}^{2^{-i}} + 1\leq g(2^{m+i+1})^{2^{-i}} + 1 \leq g(2^{m+3})^{1/4} + 1. \]
Again since $g(x)\geq x^2$, 
\[
g(2^{k+3})^{3/4} \geq (2^{2(k+3)})^{3/4} \geq 2^{\frac{3}{2}k+1} \geq (k+1)2^k
\]
and therefore
\begin{equation} \label{eq:Jordan12}
\sum_{\substack{0\leq m\leq k \\ i(m) \geq 2}} 2^m \beta_m \leq \sum_{m=0}^{k} 2^m g(2^{m+3})^{1/4} + 2^m \leq (k+1) 2^k g(2^{k+3})^{1/4} + 2^{k+1} \leq g(2^{k+3}) + 2^{k+1}.
\end{equation}

Combining \eqref{eq:Jordan10},\eqref{eq:Jordan11},\eqref{eq:Jordan12}, we get 
\[
\sum_{m=0}^{k} 2^m \beta_m \leq g(2^{k+1})+g(2^{k+2})+g(2^{k+3}) + 2^{k+2} \leq 4g(2^{k+3})
\]
and by \eqref{eq:Jordan9},
\[
h(2^{k+1}-1) = \sum_{m=0}^{k} 2^m \lceil \beta_m \rceil \leq 2 \sum_{m=0}^{k} 2^m \beta_m \leq 8g(2^{k+3})
\]
By Fact \ref{fact}, $h(n)\preceq g(n)$.

\medskip

We conclude that $\gamma_J(n)\sim g(n)\sim h(n)$ and by the Bell--Zelmanov theorem \cite{BZ}, $h(n)\sim \gamma_A(n)$ for some finitely generated associative algebra $A$.

Conversely, if $A$ is a finitely generated associative algebra then $\gamma_A(n)\sim \gamma_{A^{(+)}}(n)$ by \cite{Martinez}.

The proof is complete.
\end{proof}

Notice that an immediate consequence of Theorem \ref{thm:associative} is:

\begin{cor} \label{cor:Jordan}
Let $f\colon \mathbb{N}\rightarrow \mathbb{N}$ be an increasing, polynomially bounded function, such that $f(n)\succeq n^2$. Then there exists a finitely generated Jordan algebra $J$ such that $\gamma_J(n)\sim f(n)$.
\end{cor}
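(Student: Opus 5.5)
The plan is to combine Theorem \ref{thm:associative} with the Martínez--Zelmanov comparison between an associative algebra and its Jordan algebra. Given an increasing, polynomially bounded $f$ with $f(n)\succeq n^2$, Theorem \ref{thm:associative} provides a finitely generated associative algebra $A$, say generated by a finite set $X$, with $\gamma_A(n)\sim f(n)$. Concretely, $A=A_\mathcal{X}$ is a monomial algebra on $\{x,y\}$, which satisfies a polynomial identity. We will then take $J$ to be a suitable Jordan subalgebra of $A^{(+)}$, most simply $A^{(+)}$ itself.

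The first step is to check that $A^{(+)}$ is finitely generated as a Jordan algebra. This is the one point that needs care, since a priori the Jordan subalgebra generated by $x,y$ inside $A^{(+)}$ need not contain every associative monomial. I would take a Jordan generating set consisting of all associative monomials of length at most $2$ in $x,y$, or simply invoke the general fact used in \cite{Martinez} that a finitely generated associative algebra has a finitely generated $A^{(+)}$ with equivalent growth. The fact that the generators of $A_\mathcal{X}$ are nilpotent-like monomials should make generation easy. For instance, in characteristic $\neq 2$ one has $x\circ w$, and also $\{x,w,y\}=xwy+ywx$. Reversal symmetry is a concern here, but the monomial structure lets us isolate words: a word $xwy$ and its reverse are typically distinct basis elements.

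If this becomes messy, I will fall back to citing \cite{Martinez}, \cite{MZ22} directly, as is done at the end of the proof of Theorem \ref{thm:Jordan}. That result states that for a finitely generated associative algebra $A$, one has $\gamma_{A^{(+)}}(n)\sim\gamma_A(n)$.

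With that in hand, the chain $\gamma_J(n)=\gamma_{A^{(+)}}(n)\sim\gamma_A(n)\sim f(n)$ finishes the argument. Alternatively, the statement also follows immediately from Theorem \ref{thm:Jordan}, with Theorem \ref{thm:associative} supplying the associative realization. The main obstacle is thus only the finite generation and growth comparison for $A^{(+)}$, which I intend to import from the Martínez--Zelmanov results rather than reprove.
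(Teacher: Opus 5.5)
Your proposal is correct and follows essentially the same route as the paper: Theorem \ref{thm:associative} supplies $A$ with $\gamma_A\sim f$, you take $J=A^{(+)}$, finite generation of $A^{(+)}$ is imported (the paper cites \cite{Herstein}), and $\gamma_{A^{(+)}}\sim\gamma_A$ comes from \cite[Theorem~2]{MZ22}. Your own generation sketch is not an argument as written — the remark about reversals does not isolate $xwy$ — so you are right to rely on the citation, although the claim that monomials of length at most $2$ generate is true: for $w=x_1\cdots x_n$, the products $x_1\circ(x_2\cdots x_n)$, $x_2\circ(x_3\cdots x_nx_1)$ and $(x_1x_2)\circ(x_3\cdots x_n)$ give $w+\sigma w$, $\sigma w+\sigma^2w$ and $w+\sigma^2w$ modulo the subalgebra, where $\sigma$ is the cyclic shift, whence $2w$ lies in it.
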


\begin{proof}
By Theorem \ref{thm:associative}, there exists a finitely generated associative algebra $A$ such that $\gamma_A(n)\sim f(n)$. The Jordan algebra $A^{(+)}=(A,\circ),\ a\circ b:=\frac{1}{2}(ab+ba)$ is finitely generated as well \cite{Herstein}, and by \cite[Theorem~2]{MZ22}, $\gamma_{A^{(+)}}(n)\sim \gamma_A(n)\sim f(n)$.
\end{proof}

Conversely, Mart\'inez and Zelmanov \cite{MZ96} proved that the growth functions of Jordan algebras satisfy an analog of Bergman's gap theorem (namely, they cannot be super-linear and sub-quadratic).

\medskip

A related notion is that of \emph{Jordan superalgebras}. Petrogradsky and Shestakov \cite[Corollary~3.2]{PetrogradskyShestakov} proved that given any Lie (super)algebra, there exists a Jordan superalgebra with an equivalent growth function. Hence, by Theorem \ref{thm:Lie},
\begin{cor}
    Let $f\colon \mathbb{N}\rightarrow \mathbb{N}$ be an increasing, polynomially bounded function. Then there exists a finitely generated Jordan superalgebra $\tilde{J}$ such that $\gamma_{\tilde{J}}(n)\sim f(n)$.
\end{cor}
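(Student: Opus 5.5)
This is a direct combination of Theorem \ref{thm:Lie} with the cited result of Petrogradsky and Shestakov \cite[Corollary~3.2]{PetrogradskyShestakov}.

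First, given an increasing, polynomially bounded $f\colon\mathbb{N}\rightarrow\mathbb{N}$, we apply Theorem \ref{thm:Lie}. This yields a finitely generated Lie algebra $L$ over $\Bbbk$ with $\gamma_L(n)\sim f(n)$. For concreteness one may take the semidirect product $L=H\ltimes M$ built above, generated by $\sigma_x,\sigma_y,e$.

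Second, we invoke \cite[Corollary~3.2]{PetrogradskyShestakov}. Regarding $L$ as a Lie superalgebra with trivial odd part, it produces a finitely generated Jordan superalgebra $\tilde J$ with $\gamma_{\tilde J}(n)\sim\gamma_L(n)$. If I recall correctly, the construction is of Kantor/Tits--Kantor--Koecher type: $\tilde J$ is a Jordan superalgebra built from $L$ (e.g.\ $\tilde J=\Bbbk 1+L$ with $L$ placed in the odd part and a bracket-induced product). A finite generating set of $L$ then gives a finite generating set of $\tilde J$, and Jordan products of length $n$ span, up to bounded index rescaling, the same spaces as Lie brackets of length comparable to $n$.

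Finally, since $\sim$ is transitive, $\gamma_{\tilde J}\sim\gamma_L\sim f$, which proves the corollary.

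The only point needing care is checking that the hypotheses of \cite[Corollary~3.2]{PetrogradskyShestakov} match the present setting. Specifically, the result should apply to finitely generated Lie algebras over a field of characteristic $\neq 2$ (the standing assumption of the paper), and the equivalence it gives should be bi-Lipschitz equivalence of growth functions rather than merely equality of GK-dimensions. If that reference only gives, say, $\gamma_{\tilde J}(n)\le C\gamma_L(Dn)$ together with $\gamma_L(n)\le C\gamma_{\tilde J}(Dn)$ with explicit linear rescaling, this is exactly $\sim$.

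Should the cited statement be phrased only for GK-dimension, I would instead verify the rescaling directly from the explicit construction. I would show that the degree-$\le n$ filtration piece of $\tilde J$ contains $L^{[\le n]}$ and is contained in $\Bbbk+L^{[\le Dn]}$, and then apply Fact \ref{fact}. This verification is the step I expect to be the main (though mild) obstacle.
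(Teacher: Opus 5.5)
Your proposal is correct and is exactly the paper's argument: apply Theorem~\ref{thm:Lie} to get $L$ with $\gamma_L\sim f$, then invoke \cite[Corollary~3.2]{PetrogradskyShestakov} to get a Jordan superalgebra $\tilde J$ with $\gamma_{\tilde J}\sim\gamma_L$, and conclude by transitivity of $\sim$. Your guessed description of the construction and the fallback verification are not needed, since the paper takes $\gamma_{\tilde J}\sim\gamma_L$ directly from that reference.
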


\subsection*{Between two notions of asymptotic equivalence}
We conclude with a short discussion on two similar, but different, notions of asymptotic equivalence relevant to the growth functions of algebraic structures.
Throughout, we used the following notion of asymptotic equivalence of non-decreasing functions:
\[
f\sim g \iff \exists C,D>0:\ \forall n,\ f(n)\leq Cg(Dn)\ \text{and}\ g(n)\leq Cf(Dn).
\]
However, one may consider the finer notion
\[
f\approx g \iff \exists C>0:\ \forall n \gg 1,\ f(n)\leq g(Cn)\ \text{and}\ g(n)\leq f(Cn).
\]
Given a group, semigroup, or algebra (associative, Lie, Jordan, etc.), its growth functions with respect to different generating subsets (or subspaces) may numerically differ, but they are all $\approx$-equivalent to each other. Nevertheless, $\sim$ has some natural advantages over $\approx$; for instance, it identifies all bounded functions, and, in general, makes the growth functions of algebraic structures invariant under `finite extensions': if $I\triangleleft A$ is a finite-dimensional ideal, then $\gamma_A(n)\sim \gamma_{A/I}(n)$, and if $A\subseteq B$ are associative algebras such that $B$ is a finitely generated $A$-module, then $\gamma_A(n)\sim \gamma_B(n)$.

In \cite{BZ}, Bell and Zelmanov obtained a classification of the growth functions of associative algebras and semigroups up to the finer equivalence relation $\approx$; the characterization in Theorem \ref{thm:Jordan} avoids the discrete derivative submultiplicative condition, but is valid only up to~$\sim$.

To what extent do $\sim$ and $\approx$ really differ from each other? Zelmanov \cite{AAJZ_res,Spa} asked whether every increasing, submultiplicative function that grows at least quadratically (to avoid Bergman's gap theorem), is asymptotically equivalent to the growth functions of some finitely generated associative algebra. This was shown not to be the case up to $\approx$ in \cite{GaApp}; however, as we mentioned, the answer turns out to be affirmative when the equivalence relation is $\sim$.

\end{document}